\def\NAT@def@citea{\def\@citea{\NAT@separator}}
\theoremstyle{plain}
\newtheorem{theorem}{Theorem}[section]
\newtheorem{lemma}[theorem]{Lemma}
\theoremstyle{definition}
\newtheorem{definition}[theorem]{Definition}
\theoremstyle{remark}
\newtheorem{remark}{Remark}
\begin{document}
\title{Zeros of a Two-parameter Family of Harmonic Quadrinomials}
\author{
\bigskip \name{Oluma.A~Alemu\textsuperscript{1} and Hunduma~L.~Geleta\textsuperscript{2}}
 \affil{\textsuperscript{1} Department of Mathematics, Addis Ababa University, Addis Ababa, Ethiopia. \\  Email: oluma.ararso@aau.edu.et \\ \bigskip \textsuperscript{2} Department of Mathematics, Addis Ababa University, Addis Ababa, Ethiopia. \\  Email:hunduma.legesse@aau.edu.et}}
\maketitle
\begin{abstract}
In this paper, we determine the number of zeros and the zero inclusion regions of a two-parameter family of harmonic quadrinomials. We also determine a curve that separates sense-preserving and sense-reversing regions for these families of quadrinomials. Our work makes practical and effective use of the work of Wilmshurst, Khavinson, Dehmer, and also Bezout’s theorem in the plane.
\end{abstract}
\begin{keywords}
Analytic polynomials, harmonic polynomials, zero inclusion regions, quadrinomials.
\end{keywords} 
\section{Introduction}$\label{I}$
A complex -valued harmonic polynomial on a simply connected domain $\mathbb{D}$ is a function that can be decomposed as $f(z) = h(z) + \overline{g(z)},$ where $h$ and $g$ are polynomials of degree $n$ and $m$ respectively(see \cite{brilleslyper2012explorations} , chapter 4). In this decomposition,  $h(z)$ is called analytic \bigskip part and $g(z)$ is said to be co-analytic part of $f.$  \\ 
In 1940 Kennedy  \cite{kennedy1940bounds} bounds the roots of analytic trinoimial equations of the form $z^n + az^k + b = 0 ,$ where $ab \neq 0$ by asserting that the roots of this type of trinomials have certain bounds to their respective absolute values. In 2012,  Melman \cite{melman2012geometry} determined the regions in which the zeros of analytic trinomials of the form $p(z) = z^n - \alpha z^k - 1,$ with integers $n \geq 3$ and $1 \leq k \leq n - 1$ with $\mathrm{gcd}(k, n) = 1,$ and $\alpha \in \mathbb{C},$ lie. He determines the zero inclusion regions for the following cases: \textit{\textbf{a$)$}} for any value of $|\alpha|;$ \textit{\textbf{b$)$}}  $| \alpha| > \sigma (n, k);$ and \textit{\textbf{c$)$}}  $| \alpha| < \sigma (n, k)$ where a separability threshold, $\sigma(n, k)$, is given by  $\sigma(n, k)=\frac{n}{n-k} (\frac{n-k}{k})^{\frac{k}{n}}.$ In all these  three cases he told  useful  information  on the \bigskip location  of the zeros of $p.$ \\
Determining the number of zeros of complex-valued harmonic polynomial and locating the zeros are attracting problems in complex analysis. Let $\mathcal{Z}_f$ denote the number of zeros of $f,$ that is, number of points $z \in \mathbb{C}$ satisfying $f(z) = 0.$ For  $f(z) = h(z) + \overline{g(z)}$ with $\mathrm{deg}h=n>m=\mathrm{deg}g,$  we have $n \leq \mathcal{Z}_f \leq n^2.$ The lower bound is sharp for each $m$ and $n$ due to the generalized argument principle. Note that the zeros of $f$ are isolated. Therefore by applying  Bezout’s theorem to the real and imaginary parts of $f(z) = 0,$ the upper bound follows. This was shown by Wilmshurst \cite{wilmshurst1998valence}. For instance as illustrated by Wilmshurst in the same paper,  $\left(\frac{1}{i}\right)^n Q\left(iz+\frac{1}{2} \right) $ is a polynomial with $n^2$  zeros where $Q(z)=z^n+(z-1)^n+i\overline{z}^n -i(\overline{z}-1)^n.$ For analytic polynomial $f$ of degree $n,$ Fundamental theorem of Algebra stipulates that $f$ has $n$ total number of zeros counting \bigskip with multiplicity.\\
 Recently, Brilleslyper \textit{et al} \cite{brilleslyper2020zeros} studied on the number of zeros of harmonic trinomials of the form $p_c(z) = z^n + c\overline{z}^k - 1$ where $1 \leq k \leq n - 1, n \geq 3, c \in \mathbb{R^+},$ and $\mathrm{gcd}(n, k) = 1.$ They showed that the number of zeros of  $p_c(z)$ changes as $c$ varies and proved that the distinct number of zeros of $p_c(z)$ ranges from $n$ to $n+2k.$  Among other things, they used the argument principle for harmonic function that can be formulated as a direct generalization of the classical result for analytic functions. 
  Now we move from a one parameter trinomial to a two parameter quadrinomials and we are interested in how  the number of the zeros and location of the zeros changes as a real  parameters $b,c \in \mathbb{R}$ \bigskip varies. \\ 
In this paper, we first determine the zero inclusion regions of the zeros of quadrinomials of the form $q(z) = bz^k+\overline{z}^n +c\overline{z}^m +z$ where $k,m, n \in \mathbb{N}$ with $n>m,$ and $ b,c \in \mathbb{R}$ by considering different cases. Then we determine the possible maximum number  of the zeros of these families of harmonic quadrinomials. Finally, we determine the a curve that separates sense-preserving and sense-reversing region for this quadrinomial. The main theorems in this paper are Theorem  $\ref{III5}$ , Theorem $\ref{III6},$  Theorem $\ref{III1},$ and \bigskip Theorem $\ref{III7}.$\\
 This paper is organized as follows. In section $\ref{II},$ we present some important preliminary results that will be used to proof the main theorems. In section $\ref{III},$ We state and proof  main theorems. Theorems $\ref{III5}$ and $\ref{III6}$ determines the zero inclusion regions for the families of quadrinomials $q(z) = bz^k+\overline{z}^n +c\overline{z}^m +z$ where $k,m, n \in \mathbb{N}$ with $n>m,$ and $ b,c \in \mathbb{R}$ by taking different cases and we are interested in how the number and the location of zeros of $q(z)$ changes as $b$ and  $c$ varies. Theorem $ \ref{III1}$ determines the maximum number of  zeros for our quadrinomials.  Also we look for some properties of the zeros of $q(z)$ in Theorem $\ref{III7}.$ 
\section{Preliminaries} $\label{II}$ 
In this section we review some important concepts and results that we will use later on to prove main theorems. We begin by stating the well known results and some useful definitions, theorems and lemmas.
\begin{theorem}[\textbf{Lewy's Theorem},\cite{lewy1936non}]\label{2.1} 
If $f$ is a complex valued harmonic function that is locally univalent in a domain $D \subset \mathbb{C},$ then its Jacobian $,J_f(z),$ never vanish for all $z \in D.$ 
\end{theorem}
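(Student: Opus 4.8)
The plan is to prove the contrapositive: assuming $J_f(z_0)=0$ at some point $z_0\in D$, I will show that $f$ cannot be locally univalent at $z_0$. Writing the harmonic function as $f=h+\overline{g}$ with $h,g$ analytic, the first step is to record the Jacobian in the form
\[
J_f(z)=|f_z|^2-|f_{\bar z}|^2=|h'(z)|^2-|g'(z)|^2,
\]
so that $J_f(z_0)=0$ is exactly the statement $|h'(z_0)|=|g'(z_0)|$. The engine of the whole argument is a single topological principle: a continuous injective map of a planar domain is, by invariance of domain, an open map and a homeomorphism onto its image, and a homeomorphism between connected planar domains is globally orientation preserving or globally orientation reversing. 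Consequently, at every point where such a map is differentiable with non-vanishing Jacobian the sign of $J_f$ is forced to be constant. Hence it suffices to exhibit, in every neighbourhood of $z_0$, points at which $J_f$ takes strictly opposite signs, or else to exhibit a degenerate critical point directly.

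The main case is $h'(z_0)\neq 0$. Here I would introduce the analytic dilatation $\omega=g'/h'$, which is holomorphic on a neighbourhood of $z_0$ and satisfies $|\omega(z_0)|=1$ because $J_f(z_0)=0$. If $\omega$ is non-constant, the open mapping theorem guarantees that $\omega$ carries every neighbourhood of $z_0$ onto a neighbourhood of the boundary point $\omega(z_0)$ of the unit disc, so $\omega$ assumes values of modulus both less than and greater than $1$ arbitrarily close to $z_0$. Since $J_f=|h'|^2\bigl(1-|\omega|^2\bigr)$ and $h'\neq 0$ near $z_0$, the Jacobian changes sign in every neighbourhood of $z_0$, contradicting the orientation principle above. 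If instead $\omega$ is a unimodular constant $e^{i\gamma}$, then $g'=e^{i\gamma}h'$ forces $g=e^{i\gamma}h+\text{const}$, and a short computation shows that $e^{i\gamma/2}\bigl(f-\text{const}\bigr)=2\,\mathrm{Re}\bigl(e^{i\gamma/2}h\bigr)$ is real valued; thus $f$ maps into a line, its image has empty interior, and invariance of domain again rules out local univalence.

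It remains to treat the degenerate case $h'(z_0)=g'(z_0)=0$, in which the dilatation is unavailable. If $h'\equiv 0$ (or $g'\equiv 0$) then $f$ reduces to an analytic or anti-analytic function with a critical point at $z_0$, which is $k$-to-one for some $k\geq 2$ and hence not injective. Otherwise let $p\geq 1$ and $q\geq 1$ be the orders of the zeros of $h'$ and $g'$ at $z_0$; integrating, the lowest-order term of $f-f(z_0)$ is a homogeneous harmonic polynomial of degree $\min(p,q)+1\geq 2$, whose local topological degree has absolute value at least $2$. A local homeomorphism has local degree $\pm1$, so this again contradicts local univalence, completing the case analysis.

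The step I expect to be the genuine obstacle is the topological input invoked at the outset, namely that injectivity is incompatible with a change of sign of the Jacobian. Making this precise requires invariance of domain together with the fact that the local degree of a planar homeomorphism is a globally constant element of $\{+1,-1\}$; once this is in hand, the analytic heart of the proof is the clean observation that the \emph{analytic} dilatation $\omega=g'/h'$, being an open map, cannot touch the unit circle without crossing it, which is precisely what converts the pointwise identity $|h'(z_0)|=|g'(z_0)|$ into a sign change of $J_f$ nearby. The degenerate case is routine by comparison, requiring only the standard local-degree computation for analytic and harmonic maps with a higher-order critical point.
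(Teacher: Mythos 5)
The paper does not actually prove this statement: Lewy's theorem is quoted as a known classical result with a citation to Lewy's 1936 paper, so there is no in-paper argument to measure yours against, and I can only judge the proposal on its own terms. Your central mechanism is sound and is the standard modern proof: invariance of domain forces a locally injective planar map to have a Jacobian of locally constant sign where it is nonzero, the identity $J_f=|h'|^2\left(1-|\omega|^2\right)$ with $\omega=g'/h'$ holds, and the open mapping theorem applied to a non-constant $\omega$ with $|\omega(z_0)|=1$ produces the forbidden sign change; the unimodular-constant case correctly collapses $f$ into a line. This is a genuinely different route from Lewy's original argument, which takes a real linear combination $\psi=\alpha u+\beta v$ with $\nabla\psi(z_0)=0$ and derives the contradiction from the level set $\{\psi=\psi(z_0)\}$ having at least four branches at $z_0$ while being mapped homeomorphically into a line; that older route has the advantage of treating all degeneracies uniformly, whereas yours isolates the analytic content (an open map cannot touch the unit circle without crossing it) very cleanly.

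The genuine gap is in your degenerate case $h'(z_0)=g'(z_0)=0$ with neither derivative identically zero. Writing $\zeta=z-z_0$ and letting $p,q$ be the orders of the zeros of $h',g'$ at $z_0$, the lowest-order term of $f-f(z_0)$ when $p=q$ is $a\zeta^{d}+\overline{b}\,\overline{\zeta}^{\,d}$ with $d=p+1\geq 2$. If $|a|=|b|$ this form equals $2|a|e^{i\gamma}\,\mathrm{Re}\!\left(e^{i\delta}\zeta^{d}\right)$ for suitable constants $\gamma,\delta$: it vanishes on $2d$ rays through the origin, has no isolated zero, and has no well-defined nonzero local degree, so your assertion that the leading form ``has local topological degree of absolute value at least $2$'' is unjustified, and the implicit Rouch\'e-type comparison of $f-f(z_0)$ with its leading form breaks down (the remainder is not dominated by the leading form along those rays). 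For instance, for $f(z)=z^{2}+\overline{z}^{2}+z^{5}$ at the origin your stated reason proves nothing. The gap is patchable inside your own framework: when $p=q$ the dilatation $\omega=g'/h'$ extends holomorphically across $z_0$ with $\omega(z_0)$ equal to the ratio of leading coefficients, and the problematic sub-case is exactly $|\omega(z_0)|=1$, where your main-case argument (open mapping theorem if $\omega$ is non-constant, the line argument if it is constant, noting $h'\neq 0$ on a punctured neighborhood) applies verbatim; when $p\neq q$ the leading form is a pure power or conjugate power and your degree count is fine. You should add that reduction explicitly, or handle the whole degenerate case by Lewy's level-set argument.
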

As an immediate consequence of Theorem $\ref{2.1}$, a complex valued harmonic function $f(z)=h(z)+\overline{g(z)}$ is locally univalent and sense-preserving if and only if $h'(z) \neq 0$ and $|\omega (z)|<1,$ where $\omega (z)$ is a dilatation  function of $f$ defined by $\omega (z) = \frac{g'(z)}{h'(z)}.$  A harmonic function $f(z) = h(z) + \overline{g(z)},$ is called sense-preserving at $z_0$ if the Jacobian $J_f(z) > 0$ for every $z$ in some punctured neighborhood of $z_0.$ We also say that $f$ is sense-reversing if $\overline{f}$ is sense-preserving at $z_0.$ If  $f$ is  neither sense-preserving nor sense-reversing at $z_0,$ then $f$  is said to be a singular \bigskip polynomial at $z_0.$\\ Recall to Argument principle for harmonic functions in \cite{duren1996argument}: Let $f$ be a complex valued harmonic function in a Jordan domain $D$ with boundary $\Gamma .$ Suppose that $f$ is continuous in $\overline{D}$ and $f \neq 0$ on $\Gamma .$ Suppose that $f$ has no singular zeros in $D,$ and let $N = N_+ - N_- ,$ where $N_+$ and $N_-$ are the number zeros in sense-preserving region and sense-reversing \bigskip region of $f$ in $D,$ respectively. Then, $\bigtriangleup _\Gamma arg f(z)  = 2 \pi N.$
\begin{definition}
A point $\zeta$ is called a critical point of a polynomial $f$ if $f'(\zeta) = 0.$  A fixed point $\zeta \in \mathbb{C}$ is \textbf{attractive, repelling} or \textbf{neutral} if, respectively, $|f'(\zeta)| < 1, |f'(\zeta)| > 1$ or
$|f'(\zeta)| = 1.$  A neutral fixed point is rationally neutral if $f'(\zeta)$ is a root of unity. We shall say that a fixed point $\zeta$ attracts some point $w \in \mathbb{C}$ provided that the sequence $f^k(w) = \underbrace{f(w) \circ f(w) \circ \cdots \circ f(w)}_{k-copies}$ converges to $\zeta.$ 
\end{definition}
Note that as mentioned in \cite{carleson2013complex}, if $deg f > 1$ and  $\zeta$ is an attracting or rationally neutral fixed point, then $\zeta$ attracts some critical point of $f.$
\begin{definition}[\cite{khavinson2003number}]
Let $f(z)$ be a polynomial   such that the conditions $ |f'(z_0)| = 1$ and $\overline{f(z_0)} = z_0$ are not satisfied simultaneously for any $z_0 \in \mathbb{C}.$ Then $f(z)$ is said to be a \textbf{\textit{regular}} polynomial.
\end{definition}
\begin{definition}[\cite{brilleslyper2014locating}]
The roots that lie on the unit circle are referred to as  \textit{uni-modular roots.} 
\end{definition}
\begin{definition}
The grand orbit  of a point $z \in \hat{\mathbb{C}},$ denoted by $[z],$ under a map $f$ is defined as the set of all points $w$ such that there exists $m, n \geq ~0$ with $f^m(z) = f^n(w).$
\end{definition}
\textbf{Fact 1:} A grand orbit under a transformation $T$ is an equivalence class of the relation $x \sim y~~ iff ~~T^p(x) = T^q(y)$ for some $p, q > 0.$
\begin{theorem}[\textbf{Descartes' Rule of Signs} \cite{wang2004simple}] $\label{p''}$
    Let $p(x) = a_0x^{b_0}+ a_1x^{b_1}+ \cdots + a_nx^{b_n}$ denote a polynomial with nonzero real coefﬁcients $a_i,$ where the $b_i$ are integers satisfying $0 \leq b_0 < b_1< b_2< \cdots < b_n.$ Then the number of positive real zeros of $p(x)$ (counted with multiplicities) is either equal to the number of variations in sign in the sequence $a_0, \cdots , a_n$ of the coefficients or less than that by an even whole number. The number of negative zeros of p(x) (counted with multiplicities) is either equal to the number of variations in sign in the sequence of the coefficients of $p(-x)$ or less than that by an even whole number.
   \end{theorem}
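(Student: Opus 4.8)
The plan is to prove both assertions at once by showing that, writing $Z(p)$ for the number of positive real zeros of $p$ counted with multiplicity and $V(p)$ for the number of sign variations in the coefficient sequence $a_0,\dots,a_n$, the difference $V(p)-Z(p)$ is always a nonnegative \emph{even} integer. The claim about negative zeros is then immediate: the positive zeros of $p(-x)$ are the negatives of the negative zeros of $p(x)$, and the coefficient sign sequence of $p(-x)$ is exactly the one named in the statement, so one simply reruns the argument on $p(-x)$. A harmless first reduction is to assume $b_0=0$: since $p(x)=x^{b_0}\tilde p(x)$ with $\tilde p(0)\neq 0$ and $x^{b_0}>0$ for $x>0$, this factor alters neither the positive zeros nor the sign sequence, so $Z(p)=Z(\tilde p)$ and $V(p)=V(\tilde p)$.

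The argument rests on two facts. The first is a \emph{parity lemma}: $Z(p)\equiv V(p)\pmod 2$. To see this, observe that on $(0,\infty)$ the sign of $p$ is $\operatorname{sign}(a_0)$ near $0^{+}$ and $\operatorname{sign}(a_n)$ near $+\infty$, so the value of $p$ changes overall sign across $(0,\infty)$ exactly when $a_0$ and $a_n$ differ in sign, which happens precisely when $V(p)$ is odd. On the other hand $p$ changes overall sign across $(0,\infty)$ exactly when the number of its odd-multiplicity positive roots is odd, and this number has the same parity as $Z(p)$ (even multiplicities contribute evenly). Comparing the two descriptions yields the congruence.

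The second fact is the inequality $Z(p)\le V(p)$, which I would prove by induction on $\deg p$, with the parity lemma supplying the final sharpening. Differentiating gives $p'(x)=\sum_{b_i\ge 1}a_i b_i\,x^{\,b_i-1}$; each factor $b_i>0$ preserves $\operatorname{sign}(a_i)$ and the constant term (if any) is dropped, so the sign sequence of $p'$ is that of $p$ with at most its first entry removed, whence $V(p')\le V(p)$. Rolle's theorem, handled with multiplicities (a root of $p$ of multiplicity $m$ is a root of $p'$ of multiplicity $m-1$, and a fresh positive critical point lies strictly between consecutive distinct positive roots), gives $Z(p)\le Z(p')+1$. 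By the inductive hypothesis $Z(p')\le V(p')$, so $Z(p)\le Z(p')+1\le V(p')+1\le V(p)+1$; combining this with $Z(p)\equiv V(p)\pmod 2$ forces $V(p)-Z(p)$ to be a nonnegative even integer, and in particular $Z(p)\le V(p)$, closing the induction.

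I expect the main obstacle to be the careful bookkeeping in the inductive step rather than any deep idea: one must track multiplicities correctly in the Rolle estimate $Z(p)\le Z(p')+1$ and verify that the sparse exponent pattern $b_0<\dots<b_n$ causes no trouble when passing to $p'$ (the strictly increasing exponents are preserved and only the constant term can disappear). The conceptual crux is that neither the induction nor the parity lemma suffices alone: the induction only delivers the off-by-one bound $Z(p)\le V(p)+1$, and it is precisely the parity lemma that removes the stray $+1$, yielding the exact conclusion that the number of positive zeros equals $V(p)$ or falls short of it by an even whole number.
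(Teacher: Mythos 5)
Your proof is correct, but there is nothing in the paper to compare it against: the authors state Descartes' Rule of Signs as a quoted preliminary (Theorem \ref{p''}) and give no proof, deferring entirely to the citation \cite{wang2004simple}. Your argument is the classical self-contained one, and both halves hold up under scrutiny: the parity lemma correctly identifies the parity of $Z(p)$ with that of $V(p)$ via the signs of $p$ near $0^{+}$ and $+\infty$ (the reduction to $b_0=0$ is harmless, and odd-multiplicity roots are exactly where $p$ changes sign); the Rolle bookkeeping $Z(p)\le Z(p')+1$ with multiplicities is right (a root of multiplicity $m$ survives with multiplicity $m-1$ and each gap between distinct roots contributes a fresh critical point); and $V(p')\le V(p)$ holds because differentiation preserves the sign pattern except for possibly deleting the constant term. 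You are also right that the induction alone only yields $Z(p)\le V(p)+1$ and that the parity congruence is what eliminates the stray unit. The one point worth flagging is that this differs in flavor from the cited source: Wang's published ``simple proof'' arranges for the variation count to drop by exactly one at each Rolle step (by differentiating $x^{-b_k}p(x)$ with the exponent $b_k$ chosen at a sign change), so the inequality $Z\le V$ comes out of the induction directly, whereas you differentiate $p$ itself and lean on the parity lemma to close the gap. Both routes are standard and valid; yours has the advantage of proving the full ``differs by an even number'' statement in one pass, which is actually the form the paper quotes and uses (e.g.\ in Theorems \ref{III5} and \ref{III6}, where the authors need the exact count of two positive roots).
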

   Note that according to Descartes's Rule of Signs, polynomial $p(x)$ has no more positive roots than it has sign changes and  has no more negative roots than  $p(-x)$  has sign changes. A polynomial may not achieve the maximum allowable number of roots given by the Fundamental Theorem, and likewise it may not achieve the maximum allowable number of positive roots given by the Rule of Signs.
\begin{theorem}[Bezout's Theorem In the Plane, \cite{kirwan1992complex}]$\label{II0}$
  Let $f$ and $g$ be relatively prime polynomials in the real variables $x$ and $y$ with real coefficients, and let $\mathrm{deg}h = n$ and $\mathrm{deg}g = m.$ Then the two algebraic curves $f(x, y) = 0$ and $g(x, y) = 0$ have at most $mn$ points in common.
  \end{theorem}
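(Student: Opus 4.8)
The plan is to deduce the affine bound from the one-variable theory of resultants, converting the problem of counting common points of two plane curves into counting the roots of a single polynomial in one variable. (I read the hypothesis as $\mathrm{deg}\,f = n$ and $\mathrm{deg}\,g = m$.) Since any real common zero of $f$ and $g$ is in particular a complex common zero, it suffices to bound the number of common zeros over $\mathbb{C}$; I would therefore regard $f$ and $g$ as elements of $\mathbb{C}[x,y]$ and write them as polynomials in $y$ with coefficients in $\mathbb{C}[x]$, say $f = \sum_{i} a_i(x)\,y^i$ and $g = \sum_{j} b_j(x)\,y^j$.

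First I would normalize the coordinates. Applying a generic invertible linear substitution in $(x,y)$, I can arrange simultaneously that (i) the leading coefficients $a_n$ and $b_m$ are nonzero constants, so that the top $y$-degrees of $f$ and $g$ equal their total degrees $n$ and $m$, and (ii) any two distinct common zeros have distinct $x$-coordinates. Only finitely many directions are excluded for (i); and, using that coprimality forces the common-zero set to be finite (which one sees once some resultant is known to be nonzero), only finitely many are excluded for (ii), so a generic choice works. This normalization is precisely what makes the resultant behave well, and it is the step I expect to be the main obstacle: the sharpness of the bound $mn$, as opposed to a weaker count of distinct $x$-coordinates, hinges on separating the common points and on controlling the leading coefficients so that specialization commutes with the resultant.

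Next I would form the resultant $R(x) = \mathrm{Res}_y(f, g) \in \mathbb{C}[x]$, the determinant of the $(n+m)\times(n+m)$ Sylvester matrix of $f$ and $g$ in $y$. Because $f$ and $g$ have no common factor in $\mathbb{C}[x,y]$, they are coprime in $\mathbb{C}(x)[y]$ (by primitivity and Gauss's lemma), so $R$ is not the zero polynomial. A degree count in the Sylvester determinant, using $\mathrm{deg}\,a_i \le n-i$ and $\mathrm{deg}\,b_j \le m-j$ after the coordinate choice, then shows $\mathrm{deg}\,R \le mn$.

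Finally I would connect the two facts. If $(x_0,y_0)$ is a common zero of $f$ and $g$, then $f(x_0,y)$ and $g(x_0,y)$ share the root $y_0$; since the leading coefficients are nonzero constants, specialization commutes with taking the resultant, giving $R(x_0) = 0$. By the coordinate choice, distinct common zeros produce distinct roots $x_0$ of $R$. As $R$ is a nonzero polynomial of degree at most $mn$, it has at most $mn$ roots, and therefore the curves $f = 0$ and $g = 0$ have at most $mn$ points in common, completing the argument.
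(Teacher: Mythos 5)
Your resultant argument is correct, but note that the paper itself offers no proof of this statement: Theorem \ref{II0} is quoted as a known preliminary, with only a citation to Kirwan's book, so there is no internal proof to compare against. Your route is the standard one and is essentially what the cited source does (Kirwan proves Bezout via resultants, in the projective setting with multiplicities; you give the affine inequality directly, which is all the paper uses). You also correctly repair the statement's typo, reading $\mathrm{deg}\,f=n$ for $\mathrm{deg}\,h=n$. The one place that needs the care you already flag is the order of quantifiers in the normalization: the genericity needed for condition (ii) presupposes that the common-zero set is finite, and finiteness must be extracted first from coprimality alone --- e.g.\ by noting that $\mathrm{Res}_y(f,g)$ and $\mathrm{Res}_x(f,g)$ are both nonzero (Gauss's lemma), so the common zeros have finitely many possible $x$-coordinates and finitely many possible $y$-coordinates --- and only then can one choose a projection direction separating the finitely many points and keeping the top-degree forms $F_n,G_m$ nonvanishing. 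With that ordering made explicit, the degree count $\deg \mathrm{Res}_y(f,g)\le mn$ from the Sylvester determinant (using $\deg a_i\le n-i$, $\deg b_j\le m-j$) and the specialization property for constant leading coefficients complete a sound proof; nothing is missing.
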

  Bezout's theorem is one of the most fundamental results about the degrees of polynomial surfaces and it bounds the size of the intersection of polynomial surfaces.
  \begin{theorem}[\cite{khavinson2018zeros}]
  For a harmonic polynomial $f(z) = h(z) + \overline{g(z)}$ with real coefficients, the equation $f(z) = 0$ has at most $ n^2 - n$ solutions that satisfy $(Re z)(Imz) \neq 0$ where $\mathrm{deg}h =n>m=\mathrm{deg}g.$
  \end{theorem}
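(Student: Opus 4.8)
\emph{Proof proposal.} The plan is to reduce the equation $f(z)=0$ to the intersection of two real algebraic curves and then exploit the reality of the coefficients to lower the degree of one of them before invoking Bezout's theorem (Theorem \ref{II0}).

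First I would write $z=x+iy$ and split $f$ into its real and imaginary parts, $P(x,y)=\operatorname{Re}f(x+iy)$ and $Q(x,y)=\operatorname{Im}f(x+iy)$, which are real polynomials in $x,y$. Since $\deg h=n>m=\deg g$, the top-degree term of $f$ is $a_n z^n$ with $a_n\in\mathbb{R}\setminus\{0\}$, so both $P$ and $Q$ have degree $n$; a direct application of Bezout would then only yield the crude bound $n^2$. Every solution of $f(z)=0$ corresponds to a common zero of $P$ and $Q$, and conversely.

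The key step is the symmetry forced by the real coefficients. Because $h$ and $g$ have real coefficients, $h(\bar z)=\overline{h(z)}$ and $\overline{g(\bar z)}=g(z)$, whence $f(\bar z)=\overline{f(z)}$. Comparing real and imaginary parts gives $P(x,-y)=P(x,y)$ and $Q(x,-y)=-Q(x,y)$; that is, $Q$ is odd in $y$. In particular $Q(x,0)\equiv 0$, so $y$ divides $Q$, and I can write $Q(x,y)=y\,R(x,y)$. The leading homogeneous part of $R$ is $a_n\,\operatorname{Im}((x+iy)^n)/y$, which is a nonzero polynomial (it contains the term $n a_n x^{n-1}$), so $\deg R=n-1$ exactly.

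Finally, since $\deg h>\deg g$ the zeros of $f$ are isolated, so $P$ and $Q$ share no non-constant common factor; as any common factor of $P$ and $R$ would also divide $Q=yR$ and hence $\gcd(P,Q)$, the polynomials $P$ and $R$ are relatively prime. Off the real axis, where $y\neq 0$, one has $Q=0\iff R=0$, so the solutions of $f(z)=0$ with $\operatorname{Im}z\neq 0$ are exactly the common zeros of $P$ and $R$. Applying Bezout's theorem (Theorem \ref{II0}) to $P$ and $R$ bounds these by $(\deg P)(\deg R)=n(n-1)=n^2-n$. Since the solutions satisfying $(\operatorname{Re}z)(\operatorname{Im}z)\neq 0$ form a subset of those with $\operatorname{Im}z\neq 0$, the claimed bound follows. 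I expect the main obstacle to be the justification that $P$ and $R$ are relatively prime --- equivalently, that $f$ has only isolated zeros when $\deg h>\deg g$ --- together with confirming that division by $y$ leaves a factor $R$ of degree exactly $n-1$, so that Bezout is invoked with the correct degrees.
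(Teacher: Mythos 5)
The paper does not actually prove this statement --- it is quoted verbatim from \cite{khavinson2018zeros} as a preliminary --- so there is no internal proof to compare against; your argument is, however, essentially the one in that reference. The chain real coefficients $\Rightarrow f(\overline z)=\overline{f(z)} \Rightarrow Q=\operatorname{Im}f$ odd in $y \Rightarrow Q=yR$ with $\deg R=n-1$ exactly (the leading part $a_n\operatorname{Im}((x+iy)^n)/y$ contains $na_nx^{n-1}$), followed by Bezout (Theorem \ref{II0}) applied to $P$ and $R$, is the intended route, and restricting at the end from $\{\operatorname{Im}z\neq 0\}$ to the smaller set $\{(\operatorname{Re}z)(\operatorname{Im}z)\neq 0\}$ is legitimate. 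The one soft spot is the step you yourself flag: relative primality of $P$ and $R$. Over $\mathbb{R}$, ``the zeros of $f$ are isolated'' does not immediately rule out a common factor, since a real irreducible factor such as $x^2+y^2+1$ has empty (or zero-dimensional) real locus and so produces no curve of zeros; conversely, the finiteness of the zero set is itself usually derived from Bezout, so one must avoid circularity. The standard repair is either to pass to the complexified curves and discard common components, or to write $D=\gcd(P,R)$, bound the common zeros of $P/D$ and $R/D$ by Bezout, and show separately that the real locus of $D$ cannot contribute a one-dimensional family of zeros of $f$ (using $\lim_{z\to\infty}f(z)=\infty$ and the argument in Wilmshurst's Theorem \ref{II1}). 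This is the same level of detail at which the cited source and the paper's own sketch of Theorem \ref{II1} treat the issue, so I would call your proposal correct in substance, with that one step deserving the more careful justification you anticipate.
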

\begin{theorem}[ Wilmshurst , \cite{wilmshurst1998valence}]$\label{II1}$
If $f(z)= h(z)+\overline{g(z)} $ is a harmonic polynomial such that $\mathrm{deg}h =n>m=\mathrm{deg}g$ and $\lim_{z \rightarrow \infty}f(z) = \infty,$ then $f(z)$ has at most $n^2$ zeros.
\end{theorem}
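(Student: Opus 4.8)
The plan is to reduce the counting of harmonic zeros to a classical intersection count of two real plane curves and then invoke Bezout's theorem (Theorem \ref{II0}). First I would write $z = x + iy$ and split $f = h(z) + \overline{g(z)}$ into its real and imaginary parts, $f(z) = u(x,y) + i\,v(x,y)$, where $u$ and $v$ are real polynomials in the two real variables $x$ and $y$. A point $z$ is a zero of $f$ precisely when $(x,y)$ lies on both algebraic curves $u(x,y) = 0$ and $v(x,y) = 0$, so $\mathcal{Z}_f$ equals the number of common points of these two curves.

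Second, I would pin down the degrees. Since $\mathrm{deg}\,h = n > m = \mathrm{deg}\,g$, the co-analytic part $\overline{g(z)}$ contributes only terms of (real) degree at most $m < n$, so the top-degree homogeneous part of $f$ comes entirely from the leading term $a_n z^n$ of $h$, where $a_n \neq 0$; the hypothesis $\lim_{z\to\infty} f(z) = \infty$ reflects exactly this dominance. Writing $u_n$ and $v_n$ for the degree-$n$ homogeneous parts of $u$ and $v$, we get $u_n + i\,v_n = a_n (x+iy)^n$. Because $a_n \neq 0$ and $n \geq 1$, neither $u_n$ nor $v_n$ can vanish identically, so $\mathrm{deg}\,u = \mathrm{deg}\,v = n$ as polynomials in $x,y$.

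The crux, and the step I expect to be the main obstacle, is verifying that $u$ and $v$ are relatively prime, since Bezout's theorem requires this to guarantee a finite intersection (a shared factor would place an entire curve in the zero set). I would argue at the level of the leading forms. Suppose a real polynomial $r(x,y)$ of positive degree divided both $u$ and $v$; then its top-degree part would divide both $u_n$ and $v_n$, so it suffices to show that $\gcd(u_n, v_n)$ is constant. From $u_n + i\,v_n = a_n(x+iy)^n$ and its conjugate $u_n - i\,v_n = \overline{a_n}(x-iy)^n$ one obtains $u_n^2 + v_n^2 = |a_n|^2 (x^2+y^2)^n$. Any common real factor of $u_n$ and $v_n$ therefore has its square dividing $(x^2+y^2)^n$; but $x^2+y^2 = (x+iy)(x-iy)$ is the only irreducible real factor available, and $(x-iy)$ does not divide $(x+iy)^n$, forcing the common factor to be constant. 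Hence $u_n$ and $v_n$, and consequently $u$ and $v$, are relatively prime.

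Finally, with $u$ and $v$ relatively prime real polynomials of degree $n$, Theorem \ref{II0} applies directly and shows that the curves $u = 0$ and $v = 0$ meet in at most $n \cdot n = n^2$ points. Since these common points are exactly the zeros of $f$, we conclude that $f$ has at most $n^2$ zeros, as claimed.
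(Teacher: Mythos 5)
Your proposal is correct and follows essentially the same route as the paper: write $f=u+iv$ with $u,v$ real polynomials in $x,y$, observe both have degree $n$ because the top form comes from $a_nz^n$ alone, and apply Bezout's theorem (Theorem \ref{II0}) to get at most $n^2$ common points. In fact you go further than the paper's sketch, which never verifies the coprimality hypothesis of Theorem \ref{II0}; your argument via $u_n^2+v_n^2=|a_n|^2(x^2+y^2)^n$ and the fact that $(x-iy)\nmid(x+iy)^n$ correctly closes that gap.
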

The proof of Theorem $\ref{II1} $ can also readily follows from Bezout’s theorem. Let $f(z)=h(z) +\overline{g(z)}= 0$  with $h=u+iv$ and $g=\alpha +i\beta.$ Then $f(z)= \left( u(x,y)+\alpha(x,y)\right)+i\left(v(x,y)-\beta(x,y)\right)=0+i0.$ Now define $\eta (x,y)
:= u(x,y)+\alpha(x,y)$ and $\zeta (x,y):=v(x,y)-\beta(x,y).$ Then we do have a homogeneous polynomial equations equation 
\begin{equation}
\begin{cases}
\eta (x,y)=0\\
 \zeta (x,y)=0.
 \end{cases}
 \end{equation}
 Here we know that $\mathrm{deg} \eta =n$ and $\mathrm{deg}\zeta=n.$ Therefore by Bezout's Theorem the maximum number of zeros of $\eta$ and $\zeta$ in common is $n^2.$ 
\begin{theorem}$\label{II2}$
Let $q(z)= z-\overline{p(z)},$ where $p(z)$ is an analytic polynomial with $\mathrm{deg}p=n>1.$  Then $\mathcal{Z}_q \leq 3n-2.$
\end{theorem}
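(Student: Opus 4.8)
The plan is to recast the zeros of $q$ as the fixed points of the antiholomorphic map $F(z):=\overline{p(z)}$, since $q(z)=0$ is equivalent to $z=\overline{p(z)}$, and then to count these fixed points by combining the harmonic argument principle with the fact recalled from \cite{carleson2013complex} on attracting fixed points. First I would record the local data at a zero $z_0$ of $q$: writing $q=h+\overline{g}$ with $h(z)=z$ and $g(z)=-p(z)$, the Jacobian is $J_q(z)=|h'(z)|^2-|g'(z)|^2=1-|p'(z)|^2$, so $z_0$ is sense-preserving when $|p'(z_0)|<1$ and sense-reversing when $|p'(z_0)|>1$. A direct expansion of $F$ near a fixed point gives $|F(z)-z_0|\approx|p'(z_0)|\,|z-z_0|$, so the sense-preserving zeros of $q$ are exactly the attracting fixed points of $F$ and the sense-reversing zeros are the repelling ones. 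Theorem \ref{II0} applied to the real and imaginary parts of $q$ (both of degree $n$) shows there are finitely many zeros, and by Theorem \ref{2.1} the zeros of a regular $q$ are isolated; I would first carry out the argument assuming $q$ is regular (no singular zeros) and return to the general case at the end.

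Next I would pin down $N_+-N_-$ via the harmonic argument principle. On a large circle $|z|=R$ the term $\overline{p(z)}$ dominates, so $q(z)\sim-\overline{a_n}\,\overline{z}^{\,n}$ with $a_n$ the leading coefficient of $p$, and $\bigtriangleup_\Gamma\arg q=-2\pi n$; hence $N_+-N_-=-n$, that is $N_-=N_++n$ and the total count is $\mathcal Z_q=N_++N_-=2N_++n$. Everything therefore reduces to the single inequality $N_+\le n-1$, where $N_+$ is the number of attracting fixed points of $F$.

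To bound $N_+$ I would pass to the holomorphic second iterate $G:=F\circ F$. A short computation gives $G=\tilde p\circ p$, where $\tilde p$ is the polynomial obtained from $p$ by conjugating its coefficients, so $G$ is a genuine holomorphic polynomial of degree $n^2$, and at a fixed point $z_0$ of $F$ its multiplier is $G'(z_0)=\overline{p'(z_0)}\,p'(z_0)=|p'(z_0)|^2$. Thus each attracting fixed point of $F$ is an attracting fixed point of $G$, and by the fact quoted after the definition of critical points its immediate basin $B$ contains a critical point $c$ of $G$. The chain rule identifies the critical points of $G=F\circ F$ as the points where $p'(z)=0$ or $p'(F(z))=0$, and the relation $F\circ G=G\circ F$ forces $F(B)\subseteq B$ (any $G$-orbit in $B$ tending to $z_0$ is carried by $F$ to one tending to $F(z_0)=z_0$). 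Hence either $c$ itself satisfies $p'(c)=0$, or else $F(c)\in B$ satisfies $p'(F(c))=0$; in both cases $B$ contains a zero of $p'$. Since distinct attracting fixed points have disjoint immediate basins and $p'$ has at most $n-1$ zeros, I conclude $N_+\le n-1$, and therefore $\mathcal Z_q=2N_++n\le2(n-1)+n=3n-2$.

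Finally I would remove the regularity assumption: singular zeros correspond to neutral fixed points with $|p'(z_0)|=1$, the rationally neutral case being already covered by the cited fact (such a fixed point also attracts a critical point), while the remaining cases are handled by perturbing the coefficients of $p$ to a regular $q_t$, applying the bound, and letting $t\to0$. The hard part will be exactly this last reduction together with the basin argument of the previous paragraph: one must guarantee that the immediate basin of an attracting fixed point of $G$ genuinely retains a critical point of $F$ (not merely of $G$), and that no two of our fixed points lay claim to the same zero of $p'$. The forward invariance $F(B)\subseteq B$ and the disjointness of immediate basins are precisely what make this accounting valid, so I would devote the bulk of the write-up to justifying those two facts carefully.
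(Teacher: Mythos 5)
The paper offers no proof of Theorem \ref{II2} beyond the citation to Theorem 1 of \cite{khavinson2003number}, and your proposal is in substance a reconstruction of that cited argument (which the paper itself adapts, for the special case $p(z)=-z^n-cz^m$, in Lemmas \ref{III3} and \ref{III4}): zeros of $q$ as fixed points of $F(z)=\overline{p(z)}$, the holomorphic second iterate $G=\tilde{p}\circ p$ of degree $n^2$ with multiplier $|p'(z_0)|^2$, Fatou's theorem on critical points in basins, and the winding number $-n$ on a large circle giving $N_-=N_++n$. Your accounting for $N_+\le n-1$ is a correct and slightly cleaner repackaging: Khavinson and Swiatek show each relevant fixed point attracts at least $n+1$ of the $n^2-1$ critical points of $G$, whereas you show each basin must contain one of the $n-1$ zeros of $p'$ outright, using that critical points of $G$ satisfy $p'(z)=0$ or $p'(F(z))=0$ together with the forward invariance $F(B)\subseteq B$ (which does hold: $G^k(F(w))=F(G^k(w))\to F(z_0)=z_0$, and $F(B)$ is connected and contains $z_0$). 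Both routes give the same $n-1$.

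Two corrections to the plan. First, the "remaining" neutral cases you propose to handle by perturbation are empty: the multiplier of $G$ at any fixed point of $F$ is the nonnegative real $|p'(z_0)|^2$, so a neutral fixed point has multiplier exactly $1$ and is automatically rationally neutral; the critical-point count therefore already bounds all zeros with $|p'(z_0)|\le 1$ by $n-1$, singular ones included. Second, the coefficient-perturbation argument is shakier than you indicate, since a singular zero can have local index zero and simply vanish under perturbation, so $\mathcal{Z}_q\le \mathcal{Z}_{q_t}$ does not follow by letting $t\to 0$. The standard repair, and the one mirrored in Lemma \ref{III4}, is to apply the harmonic argument principle only over the sense-reversing region $\{|p'|>1\}$ (intersected with a large disk), which contains no singular zeros, obtaining $N_-\le (n-1)+n=2n-1$, and to absorb the singular zeros into the $n-1$ bound from the first step. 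With those adjustments your outline is the proof the paper is citing.
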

For the proof, see \cite{khavinson2003number}, Theorem 1.
\begin{theorem}[\textbf{Dehmer, M.} \cite{dehmer2006location}]$\label{II3}$
   Let $f(z) = a_nz^n+a_{n-1}z^{n-1}+ \cdots +a_1z+a_0, ~~~ a_n \neq 0$ be a complex polynomial. All zeros of $f(z)$ lie in the closed disc $K(0,\mathrm{max}(1,\delta)),$ where $M:= \mathrm{max}\{ |\frac{a_j}{a_n}| \} ~~\forall j=0,1,2,\cdots ,n-1$ and $\delta \neq 1$ denotes the positive root of the equation $z^{n+1} - (1+M)z^n +M =0.$ 
   \end{theorem}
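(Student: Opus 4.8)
The plan is to reduce the two-dimensional statement to a one-variable question and then carry out an elementary sign analysis of the auxiliary polynomial $g(r) := r^{n+1} - (1+M)r^n + M$. First I would normalize: dividing $f$ by $a_n$ changes none of its zeros and replaces the coefficients by $b_j := a_j/a_n$ with $M = \max_j |b_j|$, so without loss of generality I may take $a_n = 1$.

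Next, let $z_0$ be any zero of $f$. If $|z_0| \le 1$ there is nothing to prove, since $\max(1,\delta) \ge 1$, so assume $|z_0| > 1$. From $z_0^n = -\sum_{j=0}^{n-1} b_j z_0^{\,j}$ and the triangle inequality,
\begin{equation*}
|z_0|^n \le \sum_{j=0}^{n-1} |b_j|\,|z_0|^{\,j} \le M \sum_{j=0}^{n-1} |z_0|^{\,j} = M\,\frac{|z_0|^n - 1}{|z_0| - 1},
\end{equation*}
where the geometric-sum identity is valid because $|z_0| > 1$. Clearing the positive denominator $|z_0| - 1$ and rearranging yields exactly $g(|z_0|) \le 0$. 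Thus every zero of modulus exceeding $1$ produces a point $r = |z_0| > 1$ at which $g$ is nonpositive.

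It then remains to locate the positive zeros of $g$ and to read off its sign on $(1,\infty)$. I would record that $g(1) = 1 - (1+M) + M = 0$, so $r=1$ is always a root; that $g(0) = M > 0$ and $g(r) \to +\infty$; and that $g'(r) = r^{n-1}\big[(n+1)r - n(1+M)\big]$ vanishes on $(0,\infty)$ only at $r^* = \tfrac{n(1+M)}{n+1}$, so $g$ is strictly decreasing on $(0,r^*)$ and strictly increasing on $(r^*,\infty)$. Because $r^*$ is the global minimizer and $g(1)=0$, one gets $g(r^*) \le 0$, with strict inequality precisely when $r^* \ne 1$; the hypothesis $\delta \ne 1$ is exactly this nondegenerate case, in which $g$ has two distinct positive roots, namely $1$ and the root $\delta$ of the statement. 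Descartes' Rule of Signs (Theorem \ref{p''}) applied to the coefficient pattern $1, -(1+M), 0,\dots,0, M$, which has two sign changes, confirms there are at most—hence exactly—two positive roots. Since $g$ increases past its outermost positive root to $+\infty$, we conclude $g(r) > 0$ for every $r > \max(1,\delta)$.

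Finally I would combine the two halves: a zero $z_0$ with $|z_0| > 1$ forces $g(|z_0|) \le 0$, which by the previous paragraph is impossible once $|z_0| > \max(1,\delta)$; hence $|z_0| \le \max(1,\delta)$, so all zeros lie in $K(0,\max(1,\delta))$. The step I expect to require the most care is the monotonicity and sign bookkeeping for $g$: one must check in both regimes $\delta > 1$ and $\delta < 1$ that $\max(1,\delta)$ is genuinely the outermost positive root and that $g$ remains positive beyond it, so that no zero of $f$ can escape the disc.
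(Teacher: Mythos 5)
Your proposal is correct, and it follows the same route the paper attributes to Dehmer and itself reuses in the proofs of Theorems \ref{III5} and \ref{III6}: bound $|z_0|^n$ by a geometric sum via the triangle inequality, reduce to the auxiliary polynomial $z^{n+1}-(1+M)z^n+M$, and control its positive roots with Descartes' Rule of Signs. The paper cites the result without reproving it, but your sign and monotonicity analysis of $g$ on $(1,\infty)$ supplies exactly the details it leaves to \cite{dehmer2006location}.
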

   Note that this theorem is also another classical result for the location of the zeros of analytic complex polynomial which depends on algebraic equation's positive root. Descartes' Rule of Signs plays a great role in the proof as shown by Dehmer. 
   \begin{lemma}$\label{II4}$
  A complex valued harmonic function $f(z)=h(z)+\overline{g(z)}$ is analytic if and only if $f_{\overline{z}} =0.$
   \end{lemma}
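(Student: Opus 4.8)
The plan is to reduce the claim to the Wirtinger-derivative characterization of holomorphy and then exploit the given decomposition. First I would recall the operator $f_{\overline{z}} = \tfrac{1}{2}\left( f_x + i f_y\right)$ together with the standard fact that a $C^1$ function $f = u + iv$ satisfies the Cauchy--Riemann equations $u_x = v_y,\ u_y = -v_x$ precisely when $f_{\overline{z}} = 0$; since $h$ and $g$ are polynomials, $f$ is smooth, so on the domain in question the Cauchy--Riemann equations are equivalent to analyticity of $f$. This already disposes of the forward implication: if $f$ is analytic then $f_{\overline{z}} = 0$.

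For the converse I would compute $f_{\overline{z}}$ explicitly from $f = h + \overline{g}$. Since $h$ is analytic, $h_{\overline{z}} = 0$. For the co-analytic term, writing $g(z) = \sum_j a_j z^j$ gives $\overline{g(z)} = \sum_j \overline{a_j}\,\overline{z}^{\,j}$, whence
$$\frac{\partial}{\partial \overline{z}}\,\overline{g(z)} = \sum_j j\,\overline{a_j}\,\overline{z}^{\,j-1} = \overline{g'(z)},$$
so that $f_{\overline{z}} = \overline{g'(z)}$. Assuming $f_{\overline{z}} = 0$ then forces $\overline{g'(z)} \equiv 0$, that is $g'(z) \equiv 0$; hence $g$ is constant, and $f = h + \overline{g}$ differs from the analytic polynomial $h$ only by a constant and is therefore analytic.

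I do not anticipate a substantial obstacle here, as the lemma is essentially a bookkeeping consequence of the Wirtinger calculus. The one computation deserving care is the identity $\partial_{\overline{z}}\,\overline{g(z)} = \overline{g'(z)}$, equivalently the observation that complex conjugation interchanges the roles of $\partial_z$ and $\partial_{\overline{z}}$; the only other implicit ingredient is the smoothness of polynomials, which is what lets us pass freely between the Cauchy--Riemann equations and genuine analyticity.
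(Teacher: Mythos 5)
Your proposal is correct. The paper states this lemma without supplying any proof, so there is no argument of its own to compare against; your Wirtinger-calculus computation $f_{\overline{z}} = h_{\overline{z}} + \partial_{\overline{z}}\overline{g(z)} = \overline{g'(z)}$ is the standard route and is precisely the mechanism the paper itself relies on when it later invokes the lemma (in the proof of Lemma \ref{III2}, where $\partial_{\overline{z}}(b\overline{z}^k+z^n+cz^m)=bk\overline{z}^{k-1}$ forces $b=0$). Both directions of your argument are sound, and the one step you flag as needing care, $\partial_{\overline{z}}\,\overline{g(z)}=\overline{g'(z)}$, is verified correctly.
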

\section{Main results} $\label{III}$ 
Under this section, we find the maximum number of zeros and bound all the zeros of complex valued harmonic quadrinomials in a closed disk and come up with  a certain conclusion.  
\subsection{On the location of the zeros of $q(z)$}
In this section we find an upper bound on the moduli of all the zeros of complex valued harmonic quadrinomial  $q(z) = bz^k+\overline{z}^n +c\overline{z}^m +z$ , where $k,m, n \in \mathbb{N}$ with $n>m,$ and $ b,c \in \mathbb{R}.$ The following theorem derives a closed disk in which all zeros are included and hence it bounds the moduli of the zeros of this families of the complex valued harmonic quadrinomials.
\begin{theorem}$\label{III5}$
Let $q(z) = bz^k+\overline{z}^n +c\overline{z}^m +z$ where $k,m, n \in \mathbb{N}$ with $n>m$ and $b,c \in \mathbb{R}\setminus \{0\}.$  Let $|c|>1$ and $k>n.$ If $\delta _1 \neq 1$ is the positive real root of the equation
 \begin{equation}
|b|x^{k+1}-(|b|+|c|)x^k+|c| =0,
\end{equation} then all the zeros of $q(z)$ lie in the closed disk $D(0;R)$ where $R= \mathrm{max}\{ 1, \delta _1 \}.$
\end{theorem}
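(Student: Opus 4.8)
The plan is to reduce the harmonic equation $q(z)=0$ to a single real inequality in $r=|z|$ and then replay the argument behind Dehmer's Theorem \ref{II3}. Since $k>n>m$, the term of largest degree is the analytic term $bz^k$. I would first suppose $z$ is a zero of $q$ with $r=|z|>1$ (the case $r\le 1$ is immediate, since then $r\le R$) and isolate this term. Writing $bz^k=-(\overline z^{\,n}+c\overline z^{\,m}+z)$ and using $|\overline z|=|z|=r$, the triangle inequality gives
\begin{equation*}
|b|\,r^{k}\le r^{n}+|c|\,r^{m}+r .
\end{equation*}

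Next I would bound the right-hand side by a geometric sum. Because $|c|>1$, the largest of the subdominant coefficient moduli is $|c|$, and since $1\le m<n\le k-1$ the exponents $n,m,1$ all lie in $\{0,1,\dots,k-1\}$; hence for $r>1$
\begin{equation*}
r^{n}+|c|\,r^{m}+r\le |c|\sum_{j=0}^{k-1}r^{j}=|c|\,\frac{r^{k}-1}{r-1}.
\end{equation*}
Combining the two displays and clearing the (positive) denominator yields $|b|\,r^{k}(r-1)\le |c|(r^{k}-1)$, that is $\Phi(r)\le 0$, where $\Phi(x):=|b|x^{k+1}-(|b|+|c|)x^{k}+|c|$ is exactly the polynomial appearing in the hypothesis. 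Up to the positive factor $|b|$ this is Dehmer's characteristic equation $x^{k+1}-(1+M)x^{k}+M=0$ with $M=|c|/|b|$, which is what dictates the form of the separating equation.

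Finally I would analyze $\Phi$ on $(1,\infty)$. One checks $\Phi(1)=0$, $\Phi(0)=|c|>0$, and $\Phi(x)\to+\infty$; by Descartes' Rule of Signs (Theorem \ref{p''}) the sign pattern $+,-,+$ forces at most two positive roots, so apart from $x=1$ there is a unique further positive root $\delta_1$. Computing $\Phi'(x)=x^{k-1}\bigl[(k+1)|b|x-k(|b|+|c|)\bigr]$ locates the single interior critical point and shows that on $(1,\infty)$ either $\Phi>0$ throughout (the regime $\delta_1<1$, forcing $R=1$) or $\Phi<0$ exactly on $(1,\delta_1)$ and $\Phi>0$ on $(\delta_1,\infty)$ (the regime $\delta_1>1$, so $R=\delta_1$). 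In the first case the inequality $\Phi(r)\le 0$ cannot hold for any $r>1$, so $q$ has no zeros outside the unit disk; in the second, $\Phi(r)\le 0$ forces $r\le\delta_1$. Either way every zero satisfies $|z|\le\max\{1,\delta_1\}=R$.

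The step I expect to be the main obstacle is this last one: pinning down the sign of $\Phi$ on $(1,\infty)$ so that the implication ``$\Phi(r)\le 0$ and $r>1\Rightarrow r\le\delta_1$'' is rigorous in \emph{both} the $\delta_1>1$ and $\delta_1\le 1$ regimes, rather than simply asserting $R=\delta_1$. The geometric-sum bound also needs a moment's care when $m=1$, since then the $z$ and $c\overline z^{\,m}$ terms share the exponent $1$; but the unused power $r^{2}$ in the series (available because $k\ge 3$) absorbs the duplicated term, so the bound survives.
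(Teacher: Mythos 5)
Your route is the same as the paper's: rewrite/isolate the dominant term $bz^k$, apply the triangle inequality, majorize the three subdominant terms by $|c|/|b|$ times a geometric series (legitimate because $|c|>1$), sum the series to land on the polynomial $|b|x^{k+1}-(|b|+|c|)x^k+|c|$, and invoke Descartes' Rule of Signs to see that $1$ and $\delta_1$ are its only positive roots. Your final step --- the explicit sign analysis of $\Phi$ on $(1,\infty)$ via $\Phi(1)=0$, $\Phi'$, and the two regimes $\delta_1>1$ versus $\delta_1\le 1$ --- is actually more careful than the paper, which jumps straight from ``two positive roots'' to ``$|q(z)|\neq 0$ off $D(0,R)$.'' You also correctly flag the $m=1$ coefficient collision that the paper passes over silently.

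However, your patch for that collision has a hole. When $m=1$ \emph{and} $n=2$ \emph{and} $k=3$ (the minimal admissible triple), the power $r^2$ is not ``unused'': it is exactly the slot already consumed by $r^n$, and the only leftover index in $\{0,1,\dots,k-1\}$ is $j=0$, whose contribution $|c|\cdot r^0=|c|$ does not dominate the duplicated term $r$ for large $r$. The inequality $r^{n}+|c|r^{m}+r\le |c|\sum_{j=0}^{k-1}r^{j}$ then genuinely fails: with $k=3$, $n=2$, $m=1$, $|c|=1.1$, $r=5$ the left side is $35.5$ while the right side is $34.1$. (For $n=2$ with $k\ge 4$ you can absorb the extra $r$ into $|c|r^3$, so the problem is confined to $(k,n,m)=(3,2,1)$, but it is a real counterexample to the displayed bound as stated, so the chain $q(z)=0\Rightarrow\Phi(r)\le 0$ breaks there.) To close it you would need either a separate argument for that triple or a coarser majorant such as $(|c|+1)$ in place of $|c|$, which changes the threshold polynomial. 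For what it is worth, the paper's own proof does not fare better at this step: it writes the majorizing sum as $|z|^k+|z|^{k-1}+\cdots+1$ but then substitutes the closed form $\tfrac{|z|^k-1}{|z|-1}$, which is the sum only up to $|z|^{k-1}$, so it is implicitly relying on the same (occasionally false) inequality you used.
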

\begin{proof}
Consider $q(z) = bz^k+\overline{z}^n +c\overline{z}^m +z$ with given conditions. This quadrinomial can be rewritten as 
\begin{equation}
q(z)= \left( bz^k +z \right) + \overline{ \left( z^n+cz^m \right)}.
\end{equation}  Since both $bz^k +z$ and $z^n+cz^m$ are analytic complex-valued polynomials, $q(z)$ is a harmonic polynomial. Note that $b$ is a parameter in analytic part and $c$ is a parameter in co-analytic part of this quadrinomial.  Therefore using the triangle inequality, we have 

$$|q(z)| = |\left( bz^k +z \right) + \overline{ \left( z^n+cz^m \right)}| \geq \left| |bz^k|- \left[ \left| z+ \overline{ \left( z^n+cz^m \right)} \right| \right] \right|.$$ Since $$\left| z+ \overline{ \left( z^n+cz^m \right)} \right| \leq |z| + \left| \overline{ \left( z^n+cz^m \right)} \right|$$ and the modulus of the conjugate of any complex number equals with its modulus, the direct forward calculation gives us: $$|q(z)| \geq |b|\left[ |z|^k - \left(\frac{1}{|b|}|z| + \frac{1}{|b|}|z|^n + \frac{|c|}{|b|}|z|^m \right) \right] .$$ Then we have $$|q(z)| \geq |b|\left[|z|^k-\frac{|c|}{|b|} \left( |z|+|z|^n+|z|^m \right)\right].$$ Since also we have assumed that $k>n>m,$ we have  $$ |q(z)| \geq |b|\left[ |z|^k- \frac{|c|}{|b|}\left( |z|^k +|z|^{k-1}+ \cdots +1\right) \right]$$ $$ = |b|\left[|z^k| - \frac{\frac{|c|}{|b|}\left( |z|^k-1 \right)}{|z|-1} \right]~~~~~~~~$$ $$~~~~=|b|\left[ \frac{|z|^{k+1} - \left( 1+\frac{|c|}{|b|}\right)|z|^k +\frac{|c|}{|b|}}{|z|-1}\right]  .$$ Descartes’ Rule of Signs gives us that the function $x^{n +1} - \left(1 + \frac{|c|}{|b|}\right)x^n + \frac{|c|}{|b|}$ has exactly two distinct positive zeros, since $x = 1$ is a root with multiplicity one \cite{wang2004simple}. We then conclude that $|q(z)| \neq 0~~ \forall z \in \mathbb{C} \setminus D(0, R).$ Hence all the zeros of $q(z)$ lie in the closed disk $D(0, R).$
\end{proof}
\begin{theorem}$\label{III6}$
Let $q(z) = bz^k+\overline{z}^n +c\overline{z}^m +z$ where $k,m, n \in \mathbb{N}$ with $n>m$ and $b,c \in \mathbb{R}\setminus \{0\}.$ Let $|c|\leq 1$ and $k>n.$ If $\delta _2 \neq 1$ is the positive real root of the equation
 \begin{equation}
|b|x^{k+1}-(|b|+1)x^k+1 =0,
\end{equation} then all the zeros of $q(z)$ lie in the closed disk $D(0;R)$ where $R= \mathrm{max}\{ 1, \delta _2 \}.$
\end{theorem}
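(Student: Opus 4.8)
The plan is to follow the same strategy as the proof of Theorem~\ref{III5}, changing only the step where the hypothesis on $|c|$ enters. First I would rewrite the quadrinomial as $q(z) = (bz^k + z) + \overline{(z^n + cz^m)}$ and apply the reverse triangle inequality so as to isolate the highest-degree monomial $bz^k$. Using that conjugation preserves modulus and the ordinary triangle inequality on the remaining terms, this gives
\[
|q(z)| \geq |b|\,|z|^k - \bigl(|z| + |z|^n + |c|\,|z|^m\bigr).
\]

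The one genuinely different step from Theorem~\ref{III5} is the use of the new hypothesis $|c| \leq 1$. Whereas in Theorem~\ref{III5} one factors out $|c|/|b|$ and uses $|c| > 1$ to absorb the coefficient $1$ in front of $|z|$ and $|z|^n$, here I would instead bound $|c|\,|z|^m \leq |z|^m$ and factor out $1/|b|$, obtaining
\[
|q(z)| \geq |b|\Bigl[\,|z|^k - \tfrac{1}{|b|}\bigl(|z| + |z|^n + |z|^m\bigr)\Bigr].
\]
This is where the case split on the size of $|c|$ is resolved, and it explains why the $|c|$ disappears from the governing equation in the statement.

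Next, for $|z| > 1$ I would dominate the three powers by a full geometric sum: since $k > n > m$ forces each of the exponents $1, m, n$ to be at most $k-1$, we have $|z| + |z|^n + |z|^m \leq 1 + |z| + \cdots + |z|^{k-1} = \frac{|z|^k - 1}{|z| - 1}$. Substituting this and combining over the common denominator $|z| - 1$ yields
\[
|q(z)| \geq \frac{|b|\,|z|^{k+1} - (|b| + 1)\,|z|^k + 1}{|z| - 1},
\]
so that the sign of the lower bound is controlled by the polynomial $P(x) = |b|x^{k+1} - (|b|+1)x^k + 1$ from the statement.

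Finally I would analyze $P$. One checks directly that $x = 1$ is a root, and the coefficient sign pattern $+,\,-,\,+$ has exactly two sign changes, so Descartes' Rule of Signs (Theorem~\ref{p''}) forces exactly two positive roots; these are $1$ and the root $\delta_2 \neq 1$ named in the statement. For $|z| > R = \max\{1, \delta_2\}$ we have both $P(|z|) > 0$, since $|z|$ exceeds both positive roots and $P$ has positive leading coefficient, and $|z| - 1 > 0$, whence $|q(z)| > 0$. This shows $q$ has no zeros outside $D(0,R)$, which is the claim. I expect the only delicate points to be confirming that the geometric-sum domination is legitimate (it needs $|z| > 1$, which is exactly why the radius is taken to be $\max\{1, \delta_2\}$) and verifying via Descartes that the second positive root $\delta_2$ really exists, so that $R$ is well defined.
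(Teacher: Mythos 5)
Your proposal is correct and follows essentially the same route as the paper's own proof: the reverse triangle inequality isolating $|b|\,|z|^k$, the bound $|c|\,|z|^m\le |z|^m$ from the hypothesis $|c|\le 1$, domination by the geometric sum $\frac{|z|^k-1}{|z|-1}$ for $|z|>1$, and Descartes' Rule of Signs applied to $|b|x^{k+1}-(|b|+1)x^k+1$. If anything, your write-up is slightly more careful than the paper's in spelling out why $|z|>1$ is needed for the geometric-sum step and why $P(|z|)>0$ beyond both positive roots.
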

\begin{proof}
If  $|c|\leq 1,$ then $\max(\frac{1}{|b|}, \frac{|c|}{|b|})= \frac{1}{|b|}.$ Since $\delta _2 \neq 1$ is a positive real root of  $|b|x^{k+1}-(|b|+1)x^k+1 =0,$ it is also a positive real root of $x^{k+1}-(1+ \frac{1}{|b|})x^k+\frac{1}{|b|} =0.$ Thus we have for a complex valued harmonic polynomial $q(z) = bz^k+\overline{z}^n +c\overline{z}^m +z$ with $k>n,$ $\delta _2 \neq 1$ is a positive real root of  $x^{k+1}-(1+ \frac{1}{|b|})x^k+\frac{1}{|b|} =0.$ Then we have $$|q(z)|= |bz^k+\overline{z}^n +c \overline{z}^m +z|$$ $$~~~~~~~~~~ \geq \left||bz^k|-|z+\overline{z}^n +c \overline{z}^m|\right| $$ By triangle inequality, we then have $$ |q(z)| \geq |b|\left[ |z|^k -\left( \frac{1}{|b|}|z| +\frac{1}{|b|}|z|^n + \frac{|c|}{|b|}|z|^m \right) \right]$$ $$ \geq |b| \left[ |z|^k -\frac{1}{|b|}\left( |z|+|z|^n+|z|^m \right) \right] $$ Since also we have assumed that $k>n>m,$ we have  $$ |q(z)| \geq |b|\left[ |z|^k- \frac{1}{|b|}\left( |z|^k +|z|^{k-1}+ \cdots +1\right) \right]$$ $$ = |b|\left[|z^k| - \frac{\frac{1}{|b|}\left( |z|^k-1 \right)}{|z|-1} \right]~~~~~~~~$$ $$~~~~=|b|\left[ \frac{|z|^{k+1} - \left( 1+\frac{1}{|b|}\right)|z|^k +\frac{1}{|b|}}{|z|-1}\right]  .$$ Descartes’ Rule of Signs gives us that the function $x^{n +1} - (1 + \frac{1}{|b|})x^n + \frac{1}{|b|}$ has exactly two distinct positive zeros, since $x = 1$ is a root with multiplicity one \cite{wang2004simple}. We then conclude that $|q(z)| \neq0 ~\forall z \in \mathbb{C} \setminus D(0, R).$ Hence all the zeros of $q(z)$ lie in the closed disk $D(0, R).$
\end{proof}
\begin{remark}
For a non-zero real numbers $b$ and $c,$ the location of the zeros of quadrinomial $q(z) = bz^k+\overline{z}^n +c\overline{z}^m +z$ varies as the coefficient of co-analytic part varies.
\end{remark}
\subsection{On the number of the zeros of $q(z)$}
In this section we find an upper bound for the possible number of the  zeros of  quadrinomial \bigskip $q(z) = bz^k+\overline{z}^n +c\overline{z}^m +z$ , where $k,m, n \in \mathbb{N}$ with $n>m,$ and $ b,c \in \mathbb{R}.$ \\
Now consider the harmonic quadrinomial $q(z) = bz^k+\overline{z}^n +c\overline{z}^m +z.$ This function can be rewritten as
  \begin{equation}
   q(z)= z-\overline{(-b\overline{z}^k- z^n - cz^m)}.
  \end{equation}
Then Theorem  $\ref{II1}$  and  Theorem $\ref{II2}$ plays a great role to arrive at the following theorem. We prove this theorem by breaking it into two cases and it bounds the number of zeros of quadrinomial from above.
\begin{theorem}$\label{III1}$
Let $q(z) = bz^k+\overline{z}^n +c\overline{z}^m +z$ where $k,m, n \in \mathbb{N}$ with $k>n>m,$  $ b,c \in \mathbb{R}$ and $\mathcal{Z}_q$ denotes the number of the zeros of $q(z).$ Then
 \begin{equation}
 \mathcal{Z}_q \leq  \begin{cases} 3n-2,~~  if ~~ b=0 \\ n(n-1)+3k-2,~~if~~b \neq 0~~ and ~~n<k-1 \\ k^2,~~ if~~ b \neq 0 ~~and~~n=k-1 \end{cases}.
 \end{equation}  
\end{theorem}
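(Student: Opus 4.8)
The plan is to treat the three regimes separately, matching the three lines of the piecewise bound, and to use the rewriting $q(z)=z-\overline{\left(-b\overline{z}^{\,k}-z^{n}-cz^{m}\right)}$ together with Theorems \ref{II1} and \ref{II2}. When $b=0$ the inner polynomial $p(z)=-z^{n}-cz^{m}$ is genuinely analytic of degree $n$, and since $n>m\ge 1$ we have $\deg p=n\ge 2>1$; hence $q(z)=z-\overline{p(z)}$ is exactly of the form required by Theorem \ref{II2}, which immediately yields $\mathcal Z_q\le 3n-2$. This disposes of the first case with no further work.

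For the case $b\neq 0$ and $n=k-1$, I would read $q=h+\overline g$ with analytic part $h(z)=bz^{k}+z$ and co-analytic part $g(z)=z^{n}+cz^{m}$. Since $b\neq 0$ we have $\deg h=k$ and $\deg g=n$, and $k>n$ gives both $\deg h>\deg g$ and $\lim_{z\to\infty}q(z)=\infty$ (the term $bz^{k}$ dominates). Theorem \ref{II1} (equivalently Bezout applied to $\operatorname{Re}q=0$ and $\operatorname{Im}q=0$, each of degree $k$) then gives $\mathcal Z_q\le k^{2}$. As a consistency check, substituting $n=k-1$ into the middle expression gives $n(n-1)+3k-2=(k-1)(k-2)+3k-2=k^{2}$, so the two $b\neq 0$ formulas agree on the common boundary $n=k-1$; this is what lets me isolate $n<k-1$ as the only genuinely new case.

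The substance is the regime $b\neq 0$, $n<k-1$, where $k^{2}$ must be sharpened to $n(n-1)+3k-2$. My plan is to exploit a factorization of the imaginary part. Writing $z=x+iy$ and using that $\operatorname{Im}(z^{j})$ is divisible by $y$ for every $j$, one finds $\operatorname{Im}q(x,y)=y\,V(x,y)$ with $\deg V=k-1$. Hence a zero of $q$ either lies on the real axis $y=0$ or satisfies $V(x,y)=0$. On the axis, $q(x)=bx^{k}+x^{n}+cx^{m}+x$ is a real polynomial of degree $k$, contributing at most $k$ zeros (Descartes' rule, Theorem \ref{p''}, may be used to sharpen this). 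Off the axis one must bound the common solutions of $\operatorname{Re}q=0$ (degree $k$) and $V=0$ (degree $k-1$). The naive Bezout bound here is $k(k-1)$, and combined with the axis count it only reproduces $k^{2}$; the whole difficulty is to show that the off-axis intersections in fact number at most $n(n-1)+2k-2$, which then sums with the $k$ axis zeros to the claimed $n(n-1)+3k-2$.

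The main obstacle is therefore this off-axis estimate: one must discard $k(k-1)-\bigl(n(n-1)+2k-2\bigr)=(k-1)(k-2)-n(n-1)$ of the Bezout intersections, a quantity that is strictly positive precisely because the gap $k-n\ge 2$ holds. My intended route is to argue that these excess intersections are not genuine off-axis zeros of $q$: the top-degree ($b$-)parts of $\operatorname{Re}q$ and of $V$ are $\operatorname{Re}(z^{k})$ and $\operatorname{Im}(z^{k})/y$, whose only common zero is $z=0$, so no intersections escape to the line at infinity and the surplus must be either complex (non-real) or already accounted for on the axis. Controlling these requires the sparsity of the co-analytic block: I expect Theorem \ref{II2} to bound the zeros clustering near the real axis by a $3k-2$-type count, while the genuinely interior off-axis zeros are governed by a Wilmshurst/Bezout count of size $n(n-1)$ carried by the degree-$n$ part $z^{n}+cz^{m}$. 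Making this separation precise—pinning the off-axis contribution to exactly $n(n-1)$ rather than the naive $k(k-1)$, with honest bookkeeping of multiplicities and of any zeros on the imaginary axis—is the crux and the step I expect to be hardest to justify rigorously.
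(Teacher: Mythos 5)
Your first and third cases are correct and coincide with the paper's own treatment: for $b=0$ the quadrinomial is $z-\overline{p(z)}$ with $p(z)=-z^{n}-cz^{m}$ of degree $n>1$, so Theorem \ref{II2} gives $3n-2$; for $b\neq 0$ Wilmshurst's bound (Theorem \ref{II1}) gives $k^{2}$; and your check that $n(n-1)+3k-2$ collapses to $k^{2}$ at $n=k-1$ is right. The genuine gap is exactly where you located it: the case $b\neq 0$, $n<k-1$. Your route --- factor $\mathrm{Im}\,q = yV$ with $\deg V = k-1$, count at most $k$ real-axis zeros, then cut the off-axis Bezout count from $k(k-1)$ down to $n(n-1)+2k-2$ --- supplies no mechanism for the cut. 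Theorem \ref{II2} cannot be used to control ``zeros clustering near the real axis,'' because once $b\neq 0$ the analytic part is $bz^{k}+z$ rather than $z$, so that theorem's hypothesis fails outright; and nothing in the sparsity of $z^{n}+cz^{m}$ by itself forces the surplus $(k-1)(k-2)-n(n-1)$ intersections of $\mathrm{Re}\,q=0$ with $V=0$ to be non-real or spurious. As written, your argument establishes only $k^{2}$ in this regime.

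You should also be aware that the paper does not prove this case either: its entire argument for $n(n-1)+3k-2$ is an appeal to Wilmshurst's \emph{conjecture} that $h+\overline{g}$ with $\deg h = N > \deg g = M$ has at most $M(M-1)+3N-2$ zeros. That conjecture is not a theorem, and it is in fact known to fail for some degree pairs (counterexamples exist with $M=N-3$), so the middle bound should be regarded as conjectural for this family rather than proved. Any complete proof would have to exploit the special structure of $bz^{k}+z$ and $z^{n}+cz^{m}$ well beyond a Bezout or Descartes count; do not expect the separation you sketched to close without a substantially new idea.
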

To proof this theorem let us consider the following lemmas:
\begin{center}
\textbf{\textsl{\underline{The case for $b=0$}}}
\end{center}
\begin{lemma}$\label{III2}$
A complex valued harmonic function $b\overline{z}^k+z^n +cz^m$ is analytic if and only if  $b=0.$
\end{lemma}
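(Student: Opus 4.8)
The plan is to invoke Lemma \ref{II4}, which characterizes analyticity of a harmonic function through the vanishing of its $\overline{z}$-derivative, and then to compute that derivative explicitly. First I would write the given function $f(z) = b\overline{z}^k + z^n + cz^m$ in the standard form $f = h + \overline{g}$ by setting $h(z) = z^n + cz^m$ for the analytic part and, since $b \in \mathbb{R}$, identifying the co-analytic part $b\overline{z}^k = \overline{b z^k}$ so that $g(z) = b z^k$. This confirms that $f$ is genuinely harmonic, so that Lemma \ref{II4} applies.

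Next I would apply the Wirtinger operator $\partial_{\overline{z}}$ term by term. Because $z^n$ and $c z^m$ are holomorphic, they are annihilated by $\partial_{\overline{z}}$, whereas $\partial_{\overline{z}}(b\overline{z}^k) = b k \overline{z}^{k-1}$. Hence $f_{\overline{z}} = b k \overline{z}^{k-1}$.

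For the forward implication, if $f$ is analytic then Lemma \ref{II4} forces $f_{\overline{z}} \equiv 0$ on $\mathbb{C}$, that is, $b k \overline{z}^{k-1} \equiv 0$. Since $k \in \mathbb{N}$ gives $k \neq 0$ and $\overline{z}^{k-1}$ is a nonzero monomial in $\overline{z}$, a nonzero polynomial in $\overline{z}$ cannot vanish identically; therefore the coefficient itself must vanish, which yields $b = 0$. For the converse, if $b = 0$ then $f(z) = z^n + c z^m$ depends on $z$ alone and is a holomorphic polynomial, hence analytic.

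I do not anticipate a genuine obstacle here, since the argument reduces to a single Wirtinger computation once the correct decomposition is in place. The only point requiring minor care is the degenerate case $k = 1$, where $\overline{z}^{k-1} = 1$ and the derivative collapses to the constant $b$; but the conclusion $b = 0$ is unaffected, so the two cases can be treated uniformly.
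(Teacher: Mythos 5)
Your proposal is correct and follows essentially the same route as the paper: both apply Lemma \ref{II4} and reduce the claim to the identity $f_{\overline{z}} = bk\overline{z}^{k-1} \equiv 0$, which forces $b=0$. Your version is slightly more careful in spelling out the converse direction and the degenerate case $k=1$, but the substance is identical.
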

\begin{proof}
By lemma $\ref{II4}$ , the function $b\overline{z}^k+z^n +cz^m$ is analytic if and only if $$\frac{\partial}{\partial \overline{z}}(b\overline{z}^k+z^n +cz^m) =0.$$ But this is true if and only if $bk\overline{z}^{k-1}=0$ which directly implies that $b=0.$  
\end{proof}
Note that if $b=0,$ then we have $q(z)= z+\overline{z^n+cz^m}= z-\overline{p(z)}$ where $p(z)=-z^n-cz^m.$ Since $deg p=n,$ we have the following lemmas and we can prove them directly by the same procedure as in \cite{khavinson2003number}.  Also under this case, note that a harmonic function  $\overline{z}^n +c\overline{z}^m +z$ where $m, n \in \mathbb{N}$ with $n>m,$ and $c \in \mathbb{R}$  at a point $z_0,$ is sense-preserving if and only if $\mathrm{2Re}z^{n-m} < \frac{|z|^{2(1-m)}}{cmn} - \frac{n|z|^{2(n-1)}}{cm}-\frac{cm}{n} ,$ sense-reversing if and only if  $\mathrm{2Re}z^{n-m} > \frac{|z|^{2(1-m)}}{cmn} - \frac{n|z|^{2(n-1)}}{cm}-\frac{cm}{n} ,$  and  singular if and only if $\mathrm{2Re}z^{n-m} = \frac{|z|^{2(1-m)}}{cmn} - \frac{n|z|^{2(n-1)}}{cm}-\frac{cm}{n}.$
\begin{lemma}$\label{III3}$
The cardinality of the set of points  $$ \{ z \in \mathbb{C}: z= \overline{-z^n -cz^m}~~ and~~ |nz^{n-1}+cmz^{m-1}| \leq 1 \}$$ is  at most $n-1$ where $m, n \in \mathbb{N}$ with $n>m,$ and $ c \in \mathbb{R}.$  
\end{lemma}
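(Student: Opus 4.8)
The plan is to read the displayed set as a set of fixed points of an anti-analytic map and then bound the number of its \emph{attracting} and \emph{neutral} members by a critical-point counting argument in complex dynamics, exactly in the spirit of \cite{khavinson2003number}. As already noted in the excerpt, under $b=0$ we have $q(z)=z-\overline{p(z)}$ with $p(z)=-z^n-cz^m$. Writing $F(z):=\overline{p(z)}$, the condition $z=\overline{-z^n-cz^m}$ is precisely $F(z)=z$, while $p'(z)=-nz^{n-1}-cmz^{m-1}$ shows that $|nz^{n-1}+cmz^{m-1}|\le 1$ is the same as $|p'(z)|\le 1$. So the set in the lemma is exactly the set of fixed points $\zeta$ of $F$ with $|p'(\zeta)|\le 1$, i.e.\ the attracting ($|p'(\zeta)|<1$) and neutral ($|p'(\zeta)|=1$) fixed points of the anti-analytic map $F$.

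Since $F$ is anti-analytic, its second iterate $G:=F\circ F$ is an analytic polynomial of degree $n^2>1$ (here $n>m\ge 1$ forces $n\ge 2$). A direct computation gives $G'(z)=\overline{p'(\overline{p(z)})}\,p'(z)$, and at a fixed point $\zeta$ of $F$, where $\overline{p(\zeta)}=\zeta$, this collapses to the multiplier $G'(\zeta)=|p'(\zeta)|^2\le 1$. Hence every point of our set is a fixed point of $G$ that is either attracting (when $|p'(\zeta)|<1$) or rationally neutral with multiplier exactly $1$ (when $|p'(\zeta)|=1$). By the Fatou-type fact recalled from \cite{carleson2013complex} after the definition of critical and fixed points, each such $\zeta$ then attracts a critical point of $G$; equivalently, its basin of attraction $B_\zeta$ contains a zero $z_\ast$ of $G'$.

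The key step, which I expect to be the main obstacle, is to replace a critical point of $G$ in $B_\zeta$ by a genuine zero of $p'$ in $B_\zeta$. The zeros of $G'$ split into $\mathrm{Crit}(p)=\{z:p'(z)=0\}$ and $F^{-1}(\mathrm{Crit}(p))=\{z:p'(\overline{p(z)})=0\}$. If $z_\ast\in\mathrm{Crit}(p)$ there is nothing to do. Otherwise $F(z_\ast)\in\mathrm{Crit}(p)$, and since $F$ commutes with $G=F\circ F$ and fixes $\zeta$, it maps $B_\zeta$ into itself; thus $F(z_\ast)$ is a zero of $p'$ lying in $B_\zeta$. In either case each fixed point $\zeta$ of the set is associated with a zero of $p'$ in its own basin. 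Distinct attracting or neutral fixed points have disjoint basins of attraction, so this association is injective into the finite set of zeros of $p'$.

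Finally, because $p$ has degree $n$ its derivative $p'$ has degree $n-1$ and hence at most $n-1$ distinct zeros. The injection produced above then forces the cardinality of the set to be at most $n-1$, which is the claimed bound. The two points requiring care are the multiplier identity $G'(\zeta)=|p'(\zeta)|^2$ (so that the boundary case $|p'(\zeta)|=1$ lands on a \emph{rationally} neutral fixed point of $G$ with multiplier $1$, where the cited Fatou theorem still guarantees an attracted critical point) and the invariance $F(B_\zeta)\subseteq B_\zeta$ used to pull a critical point of $G$ back to a critical point of $p$ inside the same basin.
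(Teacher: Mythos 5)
Your argument is correct, and it reaches the bound $n-1$ by a genuinely different (and leaner) route than the paper. The paper follows the original Khavinson--\'Swi\k{a}tek counting scheme \cite{khavinson2003number}: it works entirely with the analytic second iterate $G$ of degree $n^2$, invokes the grand-orbit structure to show that each attracting or rationally neutral fixed point in the set attracts at least $n+1$ critical points of $G$ counted with multiplicity, and then divides into the total of $n^2-1$ critical points of $G$ to get $(n^2-1)/(n+1)=n-1$. You instead take the single critical point $z_\ast$ of $G$ guaranteed by the Fatou theorem and, using the factorization $G'(z)=\overline{p'(F(z))}\,p'(z)$ together with the commutation $F\circ G=G\circ F$ and $F(\zeta)=\zeta$, push it forward by at most one application of $F$ to land a genuine zero of $p'$ inside the same basin; disjointness of basins then bounds the set by $\deg p'=n-1$ with no multiplicity bookkeeping at all. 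What your approach buys is the elimination of the grand-orbit proposition (the ``$n+1$ critical points per fixed point'' claim), which is the most delicate and, in the paper's rendering, the most garbled part of the argument (the paper never even carries out the final division explicitly). What the paper's approach buys is fidelity to the cited source and slightly more information, namely a lower bound on how many critical points of $G$ each fixed point must absorb. The only points in your write-up that deserve an explicit sentence are the ones you already flag: that the multiplier identity $G'(\zeta)=|p'(\zeta)|^2$ makes the boundary case $|p'(\zeta)|=1$ a parabolic point with multiplier exactly $1$ (so the Fatou-type theorem from \cite{carleson2013complex} still applies, $G$ having degree $n^2\ge 4$), and that an orbit of $G$ converges to at most one point, which is what makes the basins disjoint and the assignment injective.
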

\begin{proof}
Let $G(z):= \overline{-\overline{z}^n-c\overline{z}^m};$ where $g(z)=-z^n-cz^m.$ Here $G(z)$ is analytic polynomial of degree $n^2.$ If $|g'(z_0)|=1$ and $g(z_0)=z_0,$ then $G'(z_o)=1.$ All points in the set $\{ z \in \mathbb{C}: z= -\overline{z}^n-c\overline{z}^m ~~ and ~~ |nz^{n-1}+cmz^{m-1}| \leq 1 \}$ are fixed points of $G(z),$ which are either attracting or rationally neutral.So each of them attracts a critical point of $G.$ As it was proved by Khavinson \cite{khavinson2003number}, if $-\overline{z_0}^n-c\overline{z_0}^m$ and $c \in \mathbb{C},$ then $\left( -\overline{z}^n-c\overline{z}^m \right)^k \rightarrow z_0$ if and only if $G^k(z) \rightarrow z_0.$ 
If $G'(c)=0,$ then there are atleast $n+1$ critical points of $G,$ counted with multiplicities, which all belongs to the same grand orbit under $\overline{g(z)}.$  Furthermore, if $G'(c)=0, g(z_0)=\overline{z_0}$ and $G^k(c) \rightarrow z_0,$ then there are $n+1$ critical points of $G,$ counted with multiplicities, all attracted to $z_0$ under the iteration of $G.$ Each point in the set $$\{ z\in \mathbb{C}: z= -\overline{z}^n-c\overline{z}^m ~~ and ~~ |nz^{n-1}+cmz^{m-1}| \leq 1 \}$$  attracts a critical point of $G,$ but then it attracts $n+1$ of them. Since different fixed points attract a disjoint set of critical points and the degree of $G(z)$ is $n^2,$ the total number of its critical points counted with multiplicities is $n^2-1.$
\end{proof}
\begin{lemma}$\label{III4}$
If $\overline{z}^n +c\overline{z}^m $ is regular polynomial, then  the cardinality of the set $$ \{ z \in \mathbb{C}:z= \overline{-z^n -cz^m}~~ and~~ |nz^{n-1}+cmz^{m-1}| > 1 \} $$ is at most $2n-1$ where $m, n \in \mathbb{N}$ with $n>m,$ and $ c \in \mathbb{R}.$
\end{lemma}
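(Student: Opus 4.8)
The plan is to recognize the set in the statement as exactly the collection of \emph{sense-reversing} zeros of the harmonic quadrinomial $q(z) = z + \overline{z}^n + c\overline{z}^m$ arising in the $b=0$ case, and then to count them by playing the harmonic argument principle against Lemma $\ref{III3}$. Writing $p(z) = -z^n - cz^m$, the condition $z = \overline{-z^n - cz^m}$ is precisely $q(z)=0$, while the dilatation of $q$ (with $h(z)=z$ and $g(z)=z^n+cz^m$) is $\omega(z) = g'(z)/h'(z) = nz^{n-1} + cmz^{m-1}$. Hence the inequality $|nz^{n-1} + cmz^{m-1}| > 1$ says exactly that $|\omega(z)| > 1$, i.e. that the zero is sense-reversing. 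Thus the cardinality in question is the number $N_-$ of sense-reversing zeros of $q$.

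First I would verify that the regularity hypothesis on $\overline{z}^n + c\overline{z}^m$ guarantees that $q$ has no singular zeros: a singular zero would force $|\omega(z_0)| = 1$ together with $\overline{p(z_0)} = z_0$, which is precisely the simultaneous occurrence excluded in the definition of a regular polynomial. With no singular zeros present, the argument principle for harmonic functions of \cite{duren1996argument} applies on a large circle $\Gamma = \{|z| = R\}$, chosen large enough to enclose every zero (possible since $q(z)\to\infty$), giving $\bigtriangleup_\Gamma \arg q(z) = 2\pi N$ with $N = N_+ - N_-$.

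Next I would compute the winding number. Since $n > m \geq 1$ forces $n \geq 2$, on a large circle the term $\overline{z}^n$ dominates both the lower-degree conjugate term $c\overline{z}^m$ and the linear term $z$; as $z = Re^{i\theta}$ traverses $\Gamma$ once counterclockwise, $\arg \overline{z}^n = -n\theta$ decreases by $2\pi n$. Hence $\bigtriangleup_\Gamma \arg q = -2\pi n$, so $N = N_+ - N_- = -n$, that is, $N_- = N_+ + n$.

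Finally, Lemma $\ref{III3}$ bounds the sense-preserving zeros, namely the set with $|nz^{n-1}+cmz^{m-1}| \leq 1$, which under regularity contains no singular points and so equals $N_+$, by $n-1$. Combining, $N_- = N_+ + n \leq (n-1) + n = 2n-1$, as claimed; note this also recovers the total $\mathcal{Z}_q \leq (n-1)+(2n-1) = 3n-2$ of Theorem $\ref{II2}$. The main obstacle I anticipate is bookkeeping rather than depth: one must confirm that the regularity assumption is exactly what is needed to split the zero set cleanly into $N_+$ and $N_-$ (no singular zeros in $D$, none on $\Gamma$), and one must fix the sign of the winding number correctly, since the sense-reversing count is driven entirely by the anti-analytic dominant term $\overline{z}^n$.
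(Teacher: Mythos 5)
Your proposal is correct and follows essentially the same route as the paper: identify the set as the sense-reversing zeros of $q(z)=z-\overline{g(z)}$ with $g(z)=-z^n-cz^m$, use regularity to rule out singular zeros, apply the harmonic argument principle with the winding number $-n$ coming from the dominant term $\overline{z}^n$ on a large circle, and invoke Lemma \ref{III3} to bound the sense-preserving count by $n-1$, giving $N_-\leq N_+ +n\leq 2n-1$. The only cosmetic difference is that you apply the argument principle globally on one large circle ($N_+-N_-=-n$), whereas the paper, following Khavinson--\'Swi\k{a}tek, integrates separately over $\partial\Gamma_+$ and $C(0,R)$ and reads off $N_-$ from the oriented boundary of the compactified sense-reversing region; the two bookkeepings are equivalent.
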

\begin{proof}
Rewrite $q(z):= z-\overline{g(z)},$ where $g(z)= -z^n-cz^m.$ Let $\Gamma _+$ be a region where $z-\overline{g(z)}$ is sense-preserving and $\Gamma _ -$ be a region where $z-\overline{g(z)}$ is sense-reversing. Then $\Gamma_+$ and $\Gamma_-$ are separated by $\partial \Gamma_+.$ Moreover, make $\Gamma_-^0$ compact by intersecting $\Gamma_-$ with a large disk $D(0,R)$ chosen so that $\partial \Gamma_+ \subset D(0,R),$ all zeros of $z-\overline{g(z)}$ are in $D(0,R)$ and the argument change of $z-\overline{g(z)}$ along the circle $C(0,R)$ is $-n.$ Then by argument principle and lemma $\ref{III3}$, $$ \bigtriangleup _{\partial \Gamma _+} \left( z-\overline{g(z)} \right) \leq 2 \pi (n-1).$$ Hence, $$ \bigtriangleup_{C(0,R)} -\bigtriangleup_{\partial \Gamma _+} \geq -2\pi(2n-1).$$ Since $C(0,R)-\partial \Gamma _+$ is the oriented boundary of $\Gamma _-^0,$ argument principle means that $\frac{-1}{2 \pi}\left[  \bigtriangleup_{C(0,R)} -\bigtriangleup_{\partial \Gamma _+} \right]$  is the number of zeros of $z-\overline{g(z)}$ in $\Gamma_-.$ This proves our lemma.
\end{proof}
Now by using lemma $\ref{III3}$ and lemma $\ref{III4}$,  we have 
\begin{equation}
\mathcal{Z}_q \leq (n-1) +(2n-1)=3n-2
\end{equation} for $b=0.$
\begin{center}
                        \textbf{\textsl{\underline{The case for $b \neq ~0$}}}
\end{center}
In his thesis Wilmshurst \cite{wilmhurst1994complex} has constructed a number of examples of harmonic polynomials $f(z)=h(z)+\overline{g(z)}$of degree $n$ with 'maximal valence' $n^2.$ If we consider our quadrinomial $q(z) = bz^k+\overline{z}^n +c\overline{z}^m +z,$ 
\begin{equation}
\lim_{z \rightarrow \infty}(bz^k+\overline{z}^n +c\overline{z}^m +z) = \infty.
\end{equation} Therefore it follows from Theorem $\ref{II1}$ that the number of zeros of $q(z)$ is bounded above by $k^2.$ That means, $\mathcal{Z}_q \leq k^2.$  In all such examples he found that, if $h(z)$ had degree $n$, then $g$ had degree at least $n - 1.$ On the basis of this and other considerations Wilmshurst has made the following conjecture: if $g$ has degree $m$ and $h$ has degree $n > m,$ then $f$ has at most $m(m - 1) + 3n -2$ distinct zeros. Hence by this conjucture, it follows that $\mathcal{Z}_q \leq n(n-1)+3k-2.$ This finishes the proof of Theorem $\ref{III1}.$
\subsection{Further analysis for $n=k>m=1$}
Under this section, we determine a curve that separates sense-preserving and sense-reversing region for the family of quadrinomials of the form $q(z) = bz^k+\overline{z}^n +c\overline{z}^m +z$ , \bigskip where $k,m, n \in \mathbb{N}$ with $n=k>m=1,$ and $ b,c \in \mathbb{R}.$\\
Note that a quadrinomial $q(z) = bz^k+\overline{z}^n +c\overline{z}^m +z$ where $k,m, n \in \mathbb{N}$ with $k,n>m$ and $b,c \in \mathbb{R}^+$ is a locally univalent and sense-preserving if and only if 
\begin{equation}
|z|\neq \left( \frac{1}{kb}\right)^\frac{1}{k-1}
\end{equation}
 and its dilatation function satisfies 
\begin{equation} 
| \omega (z)| =\frac{|nz^{n-1}+cmz^{m-1}|}{|bkz^{k-1}+1|} <1.
\end{equation} 
  Therefore a curve $\omega (z) =1$ separates the zeros in a sense-preserving region from the zeros in sense-reversing region and this region is determined  by $|nz^{n-1}+cmz^{m-1}|=|bkz^{k-1}+1|.$
\begin{theorem}$\label{III7}$
Let  $q(z) = bz^k+\overline{z}^n +c\overline{z}^m +z$ where $k,m, n \in \mathbb{N}$ such that $k,n>m=1$ and $b,c \in \mathbb{R}$ with $b \neq \pm 1.$ Suppose $n=k$ and $z^k\overline{z}$ be a pure imaginary number. Then $|\omega(z)|=1$ if and only if 
\begin{equation}
|z|= \left(\frac{1}{k}\right)^{\frac{1}{k-1}} \left(\frac{c^2-1}{b^2-1} \right)^{\frac{1}{2k-2}}
\end{equation}
 where $\omega(z)$ is a dilatation function of a quadrinomial $q(z).$ 
\end{theorem}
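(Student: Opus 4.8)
The plan is to compute the dilatation $\omega(z)$ explicitly in the specialized case and then reduce the modulus equation $|\omega(z)| = 1$ to a condition on $|z|$ by exploiting the pure–imaginary hypothesis to cancel a cross term. First I would fix the decomposition $q(z) = h(z) + \overline{g(z)}$ with analytic part $h(z) = bz^k + z$ and co-analytic part $g(z) = z^n + cz^m$ (here $c \in \mathbb{R}$, so $\overline{z}^n + c\overline{z}^m = \overline{z^n + cz^m}$). Specializing to $n = k$ and $m = 1$ gives $h'(z) = bkz^{k-1} + 1$ and $g'(z) = kz^{k-1} + c$, so that
\begin{equation}
\omega(z) = \frac{g'(z)}{h'(z)} = \frac{kz^{k-1} + c}{bkz^{k-1} + 1},
\end{equation}
which is consistent with the dilatation formula recorded just before the theorem. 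Thus $|\omega(z)| = 1$ is equivalent to the equality of squared moduli $|kz^{k-1} + c|^2 = |bkz^{k-1} + 1|^2$.

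Next I would set $w = z^{k-1}$ and expand both sides using the real-coefficient identity $|\alpha w + \beta|^2 = \alpha^2|w|^2 + 2\alpha\beta\,\mathrm{Re}(w) + \beta^2$ for $\alpha,\beta \in \mathbb{R}$. Since $|w| = |z|^{k-1}$, subtracting and collecting terms yields the defining equation of the separating curve,
\begin{equation}
k^2(1 - b^2)\,|z|^{2(k-1)} + 2k(c - b)\,\mathrm{Re}\!\left(z^{k-1}\right) + (c^2 - 1) = 0.
\end{equation}
The whole point of the extra geometric hypothesis is to eliminate the middle term.

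The key step is to interpret the assumption that $z^k\overline{z}$ is pure imaginary. Writing $z^k\overline{z} = |z|^2\, z^{k-1}$ and noting $|z|^2 > 0$ for $z \neq 0$, the product $z^k\overline{z}$ is pure imaginary precisely when $z^{k-1}$ is, i.e. $\mathrm{Re}(z^{k-1}) = 0$. Substituting this annihilates the linear term and leaves $k^2(1 - b^2)|z|^{2(k-1)} = 1 - c^2$. The hypothesis $b \neq \pm 1$ guarantees $1 - b^2 \neq 0$, so I may divide to obtain
\begin{equation}
|z|^{2(k-1)} = \frac{1}{k^2}\cdot\frac{c^2 - 1}{b^2 - 1}.
\end{equation}
Taking the positive $(2k-2)$-th root and simplifying $\left(k^{-2}\right)^{1/(2k-2)} = k^{-1/(k-1)}$ gives exactly the claimed formula. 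Each manipulation is an equivalence, so reading the chain backward establishes the converse and hence the ``if and only if.''

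The computation is essentially routine once set up; the only substantive idea is the factorization $z^k\overline{z} = |z|^2 z^{k-1}$, which converts the geometric hypothesis into the clean algebraic condition $\mathrm{Re}(z^{k-1}) = 0$ that kills the cross term. The mildest subtlety worth flagging is existence of a genuine $z$: the right-hand side above must be nonnegative, i.e. $\tfrac{c^2-1}{b^2-1} \ge 0$, so $|b|$ and $|c|$ must lie on the same side of $1$; I would remark on this but note that the stated equivalence holds at the level of the defining equation regardless.
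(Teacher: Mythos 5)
Your proposal is correct and follows essentially the same route as the paper: expand $|g'(z)|^2=|h'(z)|^2$, use the hypothesis that $z^k\overline{z}$ is pure imaginary to kill the cross terms $\mathrm{Re}(z^k\overline{z})$ and $\mathrm{Re}(z^n\overline{z}^m)$, and solve for $|z|^{2k-2}$ using $b\neq\pm 1$. Your observation that $z^k\overline{z}=|z|^2z^{k-1}$ reduces the hypothesis to $\mathrm{Re}(z^{k-1})=0$ is a nice clarification, but the computation is the same as the paper's.
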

\begin{proof}
By definition, $|\omega(z)|=1$ iff $\frac{|\left(z^n+cz^m\right)'|}{|\left( bz^k+z \right)'|}=1.$ After a forward calculations we arrive at $|\omega(z)|=1$ iff 
$n^2|z|^{2n}+c^2m^2|z|^{2m}-b^2k^2|z|^{2k}-|z|^2 =  2bk \mathrm{Re}(z^k\overline{z})-2cmn \mathrm{Re}(z^n\overline{z}^m). $
 But here we have assumed that  $z^k\overline{z}$ is a pure imaginary number. As a result we do have the following. $$ |\omega (z)|=1 \Leftrightarrow n^2|z|^{2n}+c^2m^2|z|^{2m}-b^2k^2|z|^{2k}-|z|^2 = 0.~~~~~~~~~~~~~~~~~~~~~~~~~~~~~~~~~~~~~~~$$ $$ \Leftrightarrow k^2|z|^{2k-2}+c^2-b^2k^2|z|^{2k-2}=1. ~~~~~~~~~~~~~~~~~~~~~~~~~~~~~~~~~~~~~~~~~~~~~~~~~~~~~~~~~~~~~~~~$$ $$\Leftrightarrow |z|^{2k-2}= \left[\frac{1}{k^2} \left( \frac{c^2-1}{b^2-1} \right) \right]. ~~~~~~~~~~~~~~~~~~~~~~~~~~~~~~~~~~~~~~~~~~~~~~~~~~~~~~~~~~~~~~~~~~~~~~~~~~~~~~~~~~ $$ $$ \Leftrightarrow |z|=\left[ \frac{1}{k^2} \left(\frac{c^2-1}{b^2-1}\right)\right]^{\frac{1}{2k-2}}. ~~~~~~~~~~~~~~~~~~~~~~~~~~~~~~~~~~~~~~~~~~~~~~~~~~~~~~~~~~~~~~~~~~~~~~~~~~~~~~~~~~~~~~~~~~$$
\end{proof}
Now let us denote $\mathcal{M}_{b,c}:= \left(\frac{c^2-1}{k^2(b^2-1)}\right)^\frac{1}{2k-2}$ and let us introduce the following as  definition. 
\begin{definition}
The roots of the quadrinomial $q(z)=bz^k+\overline{z}^n +c\overline{z}^m +z$ that lie on the circle of radius $\mathcal{M}_{b,c}$ are said to be the $\mathcal{M}_{b,c}$\textit{-modular} roots.
\end{definition}

\section{Conclusion and Discussion}$\label{IV}$
In this paper  we have found the maximum number of the zeros of  of a two parameter family of harmonic quadrinomials $q(z) = bz^k+\overline{z}^n +c\overline{z}^m +z$ where $k,m, n \in \mathbb{N}$ with $k>n>m,$ and $ b,c \in \mathbb{R}$ to be $3n-2$  if ~~ $b=0$ and is $n^2-n+3k-2$ if $b\neq 0.$  Also we have derived a locations for the zeros and the result  shows that the location of the zeros also changes as a parameter in co-analytic part varies without any restriction to non-zero coefficients in analytic part. For this families of quadrinomials, also we have determined the curve $\Gamma _{b,c}= \lbrace z \in \mathbb{C}:|z|=\mathcal{M}_{b,c} =\left(\frac{c^2-1}{k^2(b^2-1)}\right)^\frac{1}{2k-2} \rbrace$ which separates zeros in sense-preserving region from the zeros in sense-reversing region by considering \bigskip the relation $k=n>m=1$.\\
For further investigation we have the following to be considered: 
\begin{itemize}
\item[(1)] How many roots of $q(z)$ are $\mathcal{M}_{b,c}$\textit{-modular}? What can be said on the number of zeros inside and outside of the circle $\Gamma _{b,c}= \{z \in \mathbb{C}:|z|=\mathcal{M}_{b,c} \}?$
\item[(2)] What is image of the circle $\Gamma _{b,c}= \{z \in \mathbb{C}:|z|=\mathcal{M}_{b,c} \}$ under the quadrinomial $q(z)=bz^k+\overline{z}^n +c\overline{z}^m +z?$
\end{itemize}
\section*{Acknowledgments}
The authors gratefully acknowledge Addis Ababa University for providing access to the completion of this work.
\section*{Declaration of Interest of Statement}
The authors declare that there are no conflicts of interest regarding the publication of this paper.

\end{document}